\documentclass{amsart}

\usepackage[utf8]{inputenc}
\usepackage{hyperref}
\usepackage{amssymb}
\usepackage{amsmath}
\usepackage{amsthm}

\newtheorem{theorem}{Theorem}[section]
\newtheorem{lemma}{Lemma}[section]
 \theoremstyle{remark}
\newtheorem{remark}{Remark}[section]

\newcommand{\N}{\mathbb{N}}
\newcommand{\R}{\mathbb{R}}
\newcommand{\Z}{\mathbb{Z}}
\newcommand{\be}{\begin{equation}}
\newcommand{\ee}{\end{equation}}

\begin{document}

\title[Embedding Theorems for Potential Spaces of Radial Functions]{Elementary proofs of  Embedding  Theorems for Potential Spaces of Radial Functions}

\author{Pablo L.  De N\'apoli}
\address{IMAS (UBA-CONICET) and Departamento de Matem\'atica\\
Facultad de Ciencias Exactas y Naturales\\
Universidad de Buenos Aires\\
Ciudad Universitaria\\
1428 Buenos Aires\\
Argentina}
\email{pdenapo@dm.uba.ar}

\author{Irene Drelichman}
\address{IMAS (UBA-CONICET) and Departamento de Matem\'atica\\
Facultad de Ciencias Exactas y Naturales\\
Universidad de Buenos Aires\\
Ciudad Universitaria\\
1428 Buenos Aires\\
Argentina}
\email{irene@drelichman.com}

\thanks{Supported by ANPCyT under grant PICT 1675/2010, by CONICET under  grant PIP 1420090100230 and by Universidad de Buenos Aires under grant 20020090100067. The authors are members of
CONICET, Argentina.}

\subjclass{Primary 46E35; Secondary 35A23}

\keywords{Sobolev spaces;  potential spaces; radial functions; embedding theorems; power weights}

\date{April 29, 2014}

\begin{abstract}
We present elementary proofs of weighted embedding theorems for radial potential spaces and some generalizations of Ni's and Strauss' inequalities in this setting. 
\end{abstract}

\maketitle

\section{Introduction: Sobolev spaces and embedding theorems}

The aim of this note is twofold: to review some known results from the theory of radial functions in Sobolev (potential) spaces, and to extend some of this results to the setting of weighted radial spaces. In both cases, we provide new elementary proofs that avoid the use of interpolation theory and sophisticated tools such as atomic or wavelet decompositions. 

This will have, at times, the limitation of not giving the most general possible result, in which case we will do our best to provide suitable references. However, we believe that the proofs presented here have the merit of being closer in spirit to some classical theorems (such as the Sobolev embedding theorem) and that the results obtained are good enough for common applications to the theory of partial differential equations.

Before we state the results we are interested in, let us  quickly recall that for Sobolev spaces of integer order, the most classical definition is in terms of derivatives: given a domain $\Omega \subset \R^n$, for integer $k \in \N$,
$$W^{k,p}(\Omega) = \left \{ u \in L^p(\Omega) : D^\alpha u
\in L^p(\Omega) \; \hbox{for any} \; \alpha \in \N_0^n  \; 
\hbox{with} \; |\alpha|\leq k \right \} $$
where the derivatives $D^\alpha u$ are understood in weak (or
distributional) sense. It is also usual to denote $W^{k,2}(\Omega)$ by
$H^k(\Omega)$.

When it comes to Sobolev spaces of fractional order, there exist several definitions in the
literature, among others:
\begin{enumerate}

\item[a)] Sobolev spaces of fractional order based on $L^2(\R^n)$ can be defined in terms of the Fourier transform:
for real $s \geq 0$, let
$$ H^{s}(\R^n) = 
\{ u \in L^2(\R^n) : \int_{\R^n} 
|\widehat{u}(\omega)|^2 (1+|\omega|^2)^{s} \; d\omega < \infty   \} $$
For integer $s$, we have that $H^s(\R^n)= W^{k,2}(\R^n)$.

\item[b)] One way to define spaces of fractional order based on $L^p(\R^n)$
when $p\neq 2$ is given by the classical potential spaces $H^{s,p}(\R^n)$ defined by:
$$ H^{s,p}(\R^n) = \{ u : u = (I-\Delta)^{-s/2} f \; \hbox{with} \; 
f\in L^p(\R^n) \} $$
Here, $1<p<\infty$ and the fractional power $(I-\Delta)^{-s/2}$ can be defined by means of the Fourier transform (for functions in the Schwarz class):
$$ (I-\Delta)^{-s/2} f = \mathcal{F}^{-1} ( (1+|\omega|^2)^{-s/2}
\mathcal{F}(f) ) = G_s * f $$
where
\begin{equation}
\label{Bessel-potential}
G_s(\omega) = \mathcal{F}^{-1} ( (1+|\omega|^2)^{-s/2} ) =\frac{(2 \sqrt{\pi})^{-n}}{\Gamma (s/2)} 
\int_0^\infty e^{-t} e^{\frac{-|x|^2}{4t}} t^{\frac{s-n}{2}} \; \frac{dt}{t} 
\end{equation}
is called the Bessel potential. Some classical references on  potential spaces are 
\cite{Stein}, \cite{AS} and \cite{Calderon}.  Notice that this family includes all the previous spaces (when $\Omega=\R^n$) since we have that
$ H^{1,p}(\R^n) = W^{1,p}(\R^n)$ for $1 < p < \infty $ by \cite[Theorem 7]{Calderon}.
Also $H^{s,2}(\R^n) = H^{s}(\R^n)$ for any $s\geq 0$ by Plancherel's
theorem. 

\item[c)] Another (non-equivalent) definition of fractional Sobolev spaces
is given by the Aronszajn-Gagliardo-Slobodeckij spaces, for $0<s<1, 1\leq p < \infty$:
$$ W^{s,p}(\Omega)= \left\{ u \in L^p(\Omega) :
\frac{|u(x)-u(y)|}{|x-y|^{\frac{n}{p}+s}} \in L^p(\Omega \times \Omega)
\right \}  $$
See \cite{DPV} for a full exposition. We have that $H^{s,2}(\R^n)=
W^{s,2}(\R^n)=H^s(\R^n)$ for any $0<s<1$, but for $p\neq 2$, $H^{s,p}(\R^n)$ and
$W^{s,p}(\R^n)$ are different.

\item[d)] Even more general families of functional spaces are those of Besov-Lipschitz and 
Triebel-Lizorkin spaces. Indeed, the Littlewood-Paley theory says that
$H^{s,p}(\R^n)$ coincides with the Triebel-Lizorkin space $F^{s}_{p,2}(\R^n)$,
whereas $W^{s,p}(\R^n)$ (as defined in c))  coincides with the
Besov-Lipschitz space $B^s_{p,2}(\R^n)$.
\end{enumerate}

See for instance \cite{Adams} for a full exposition of the theory of Sobolev
spaces, and \cite{Triebel} for a discussion of the relations of the different
functional spaces with the scales of Besov-Lipschitz and Triebel-Lizorkin
spaces.

In this note, we shall focus on potential spaces.  A key result in the theory  is the Sobolev embedding
theorem, that reads as follows:

\begin{theorem}[classical Sobolev embedding]\cite[section 10]{AS},\cite[Theorem 6] {Calderon}
Assume that $sp<n$ and define the critical Sobolev  
exponent $p^*$ as
$$ p^* := \frac{np}{n-sp} $$
Then, there is a continuous embedding 
\be H^{s,p}(\R^n) \subset L^{q}(\R^n) \quad 
\hbox{for } \; p \leq q \leq p^*
\label{Sobolev-embedding}
\ee 
\label{Sobolev-embedding-theorem}
\end{theorem}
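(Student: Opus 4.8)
The plan is to exploit the convolution structure built into the definition of the potential space: every $u \in H^{s,p}(\R^n)$ is of the form $u = G_s * f$ with $f \in L^p(\R^n)$ and $\|f\|_{L^p} \approx \|u\|_{H^{s,p}}$, so it suffices to bound $\|G_s * f\|_{L^q}$ in terms of $\|f\|_{L^p}$. The first step is to extract from the integral representation \eqref{Bessel-potential} the two qualitative features of the Bessel kernel that drive everything: that $G_s \ge 0$ with $\|G_s\|_{L^1} = 1$ (the latter because $\widehat{G_s}(0) = (1+0)^{-s/2} = 1$), and that $G_s(x) \le C|x|^{s-n}$ for all $x \ne 0$. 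For the pointwise bound I would rescale the $t$-integral by $t = |x|^2 \tau$ near the origin, where the Gaussian factor forces convergence, and use the exponential decay of $e^{-t}$ at infinity, where $|x|^{s-n}$ is in any case bounded below; combining the two regimes gives the global majorization by the Riesz kernel.

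Granting this, the endpoint $q = p^*$ is the analytic core. Since $|u(x)| \le (G_s * |f|)(x) \le C\, I_s(|f|)(x)$, where $I_s g(x) = \int_{\R^n} |x-y|^{s-n} g(y)\, dy$ is the Riesz potential, it is enough to prove that $I_s$ maps $L^p(\R^n)$ into $L^{p^*}(\R^n)$. To keep the argument elementary and avoid interpolation, I would use Hedberg's pointwise inequality $I_s(|f|)(x) \lesssim \|f\|_{L^p}^{sp/n}\, (Mf(x))^{1 - sp/n}$, obtained by splitting the defining integral over a ball $B_R(x)$ and its complement, estimating the inner part by the Hardy--Littlewood maximal function $Mf$ (via dyadic annuli) and the outer part by H\"older's inequality (the relevant integral converging precisely because $sp < n$), and then optimizing in $R$. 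Raising this to the power $p^*$ and integrating, the exponent bookkeeping collapses, since $(1 - \tfrac{sp}{n})p^* = p$, so the result reduces to the strong $(p,p)$ bound $\|Mf\|_{L^p} \lesssim \|f\|_{L^p}$, valid since $p > 1$. This yields $\|u\|_{L^{p^*}} \lesssim \|f\|_{L^p} \approx \|u\|_{H^{s,p}}$.

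The lower endpoint $q = p$ is immediate from Young's convolution inequality: $\|u\|_{L^p} = \|G_s * f\|_{L^p} \le \|G_s\|_{L^1}\|f\|_{L^p} = \|f\|_{L^p}$. For the intermediate exponents $p < q < p^*$ I would invoke no interpolation machinery, but only the elementary log-convexity of the Lebesgue norms (a direct consequence of H\"older's inequality): writing $\tfrac1q = \tfrac{\theta}{p} + \tfrac{1-\theta}{p^*}$ with $\theta \in (0,1)$, one has $\|u\|_{L^q} \le \|u\|_{L^p}^{\theta}\, \|u\|_{L^{p^*}}^{1-\theta}$, and both factors on the right are already controlled by $\|u\|_{H^{s,p}}$.

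I expect the main obstacle to be the endpoint estimate for the Riesz potential, i.e. the Hardy--Littlewood--Sobolev inequality; everything else is either a routine kernel computation or an application of H\"older and Young. The Hedberg route reduces this genuine difficulty to the boundedness of the maximal operator, which is exactly why it fits the ``elementary'' philosophy of the paper; one must only keep in mind that it delivers the strong-type bound solely for $p > 1$ --- precisely the range in which $H^{s,p}$ is defined --- and would fail at $p = 1$.
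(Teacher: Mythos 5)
Your proposal is correct and follows essentially the same route as the paper's own sketch in Section 2: the bound $G_s\in L^1$ together with Young's inequality for $q=p$, the pointwise domination $|G_s*f|\leq C\,I^s(|f|)$ together with the Hardy--Littlewood--Sobolev theorem for $q=p^*$, and H\"older's inequality (log-convexity of the $L^q$ norms) for the intermediate exponents. The only difference is that the paper cites the Hardy--Littlewood--Sobolev theorem as a known result (Theorem \ref{HLS-theorem}), whereas you additionally supply a self-contained proof of it via Hedberg's maximal-function inequality, which is a standard and correct way to make the argument fully elementary.
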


Embedding theorems like this one play a central role when one wants to apply the direct
methods of  calculus of variations to obtain solutions of nonlinear
partial differential equations. Consider, for simplicity, the following elliptic
nonlinear boundary-value model problem in a smooth domain $\Omega \subset \R^n$: 
\be
\label{problema-modelo}
\left\{
\begin{array}{rclll}
-\Delta u & = & u^{q-1} & \hbox{in} & \Omega \\
u & = &0 & \hbox{on}\; & \partial \Omega \\
\end{array}
\right.
\ee
It is well known that solutions of this problem can be obtained as critical points of the
energy functional 
$$ J(u)= \frac{1}{2} \int_{\Omega} |\nabla u|^2 - 
\frac{1}{q} \int_{\Omega} |u|^q $$ 
in the natural energy space, which is the 
Sobolev space $H^1_0(\Omega)$, i.e., the closure of $C_0(\Omega)$
in $H^1(\Omega)$  (observe that we need to restrict $u$ to this subspace in order to
reflect the Dirichlet boundary condition; this implies in turn that the
functional is coercive). The Sobolev embedding gives that
$$ H^1(\Omega) \subset L^q(\Omega) \; \quad 2 \leq q \leq
2^*=\frac{2n}{n-2} $$
which implies that $J$ is well defined (and actually of class $C^1$) 
for $q$ within that range. 

Then, one can use minimax theorems like the mountain pass theorem of  
Ambrosetti and Rabinowitz (see  \cite{Rabinowitz} or \cite{Willem}), in order to obtain critical points of $J$ that
correspond to non-trivial solutions of our elliptic problem. However, this
kind of theorems usually need some compactness assumption like the
Palais-Smale condition:

\begin{quote}
For any sequence $u_n$ in the space $H^1_0(\Omega)$ 
such that $J(u_n) \to 0$ and $J^\prime(u_n) \to 0$, there
exist a convergent subsequence $u_{n_k}$
\end{quote}

In the subcritical case $2 < q < 2^*$, this condition is easily verified by
using the  Rellich-Kondrachov theorem: 

\begin{theorem}
If $\Omega \subset \R^n$ is a bounded domain, and $2 \leq q < 2^*$ then 
the Sobolev embedding
$$ H^1_0(\Omega) \subset L^q(\Omega) $$
is \emph{compact}.
\end{theorem}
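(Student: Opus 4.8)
The plan is to prove compactness first for the exponent $q=2$ and then to reach the full range $2\le q<2^*$ by an elementary interpolation of Lebesgue norms. Concretely, let $(u_m)$ be a bounded sequence in $H^1_0(\Omega)$. By the classical Sobolev embedding (Theorem~\ref{Sobolev-embedding-theorem} with $s=1$, $p=2$) the sequence is also bounded in $L^{2^*}(\Omega)$. Hölder's inequality gives the interpolation bound
$$ \|u\|_{L^q} \le \|u\|_{L^2}^{1-\theta}\,\|u\|_{L^{2^*}}^{\theta}, \qquad \frac{1}{q}=\frac{1-\theta}{2}+\frac{\theta}{2^*}, $$
valid for $2\le q<2^*$ with $\theta\in[0,1)$. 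Consequently, once we exhibit a subsequence converging in $L^2(\Omega)$, the same subsequence converges in $L^q(\Omega)$ for every $q$ in the stated range, and the whole problem reduces to proving that the embedding $H^1_0(\Omega)\subset L^2(\Omega)$ is compact.

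For the $L^2$ statement I would invoke the Fr\'echet--Kolmogorov compactness criterion: a bounded family $\mathcal{F}\subset L^2(\R^n)$ is relatively compact provided its translates are uniformly small, i.e. $\sup_{u\in\mathcal{F}}\|u(\cdot+h)-u\|_{L^2}\to 0$ as $h\to 0$, and the mass of its members does not escape to infinity. Since every $u\in H^1_0(\Omega)$ extends by zero to a function on $\R^n$ (this is exactly what membership in the closure of $C_0(\Omega)$ buys us), the tightness condition is automatic, because all functions are supported in the fixed bounded set $\Omega$. The essential point is therefore the equicontinuity of the translates.

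The key estimate is that, for $u\in C_0^\infty(\Omega)$,
$$ \|u(\cdot+h)-u\|_{L^2(\R^n)} \le |h|\,\|\nabla u\|_{L^2(\R^n)}, $$
which follows by writing $u(x+h)-u(x)=\int_0^1 \nabla u(x+th)\cdot h\,dt$, applying Minkowski's integral inequality, and using that translation is an isometry on $L^2$. By density of $C_0^\infty(\Omega)$ in $H^1_0(\Omega)$ the same bound holds for every $u\in H^1_0(\Omega)$, with the right-hand side controlled by $|h|$ times the (uniformly bounded) $H^1$ norm. This yields the required uniform smallness of translates over the bounded sequence, so Fr\'echet--Kolmogorov produces a subsequence converging in $L^2(\R^n)$, hence in $L^2(\Omega)$.

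The main obstacle I anticipate is not any single computation but the careful handling of the boundary: the translation estimate is clean only because functions in $H^1_0(\Omega)$ may be extended by zero without creating a distributional gradient on $\partial\Omega$, and this is precisely where the hypothesis $H^1_0$ (rather than $H^1$) and the boundedness of $\Omega$ enter. An alternative to citing the Fr\'echet--Kolmogorov theorem, should a more self-contained argument be preferred, is to split $u_m = (u_m - u_m*\rho_\varepsilon) + u_m*\rho_\varepsilon$, bound the first term uniformly in $m$ by the translation estimate, and control the second, for fixed $\varepsilon$, by the Arzel\`a--Ascoli theorem; both routes rest on the same gradient bound.
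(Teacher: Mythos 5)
Your proof is correct. Note that the paper itself does not prove this statement: the Rellich--Kondrachov theorem is quoted as a classical fact with references to Rabinowitz and Willem. The natural comparison is therefore with Section 7, where the authors prove Lions' compactness theorem for $H^{s,p}_{rad}(\R^n)\subset L^q(\R^n)$, and there the strategy is exactly yours: verify the three hypotheses of the Kolmogorov--Riesz (Fr\'echet--Kolmogorov) criterion --- boundedness, equicontinuity of translates, and tightness. The difference lies in which condition carries the weight. On a bounded domain, tightness is free (as you observe, zero extension of $H^1_0$ functions keeps all supports inside $\Omega$), and equicontinuity follows from the elementary gradient bound $\|u(\cdot+h)-u\|_{L^2}\le |h|\,\|\nabla u\|_{L^2}$; in the paper's whole-space radial setting, equicontinuity is instead obtained by Young's inequality from the translation continuity of the Bessel kernel $G_s$ (which works for fractional $s$, where no pointwise gradient is available), while tightness is the substantive step, requiring the weighted radial embedding of Theorem \ref{embedding-theorem} to control the mass at infinity. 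Your reduction of the full range $2\le q<2^*$ to the single exponent $q=2$ via the H\"older interpolation $\|u\|_{L^q}\le\|u\|_{L^2}^{1-\theta}\|u\|_{L^{2^*}}^{\theta}$ with $\theta<1$, using boundedness in $L^{2^*}$ from the Sobolev embedding, is also sound and is the standard way to get the endpoint-free range. Your closing remark correctly identifies where the hypotheses enter: the zero extension with $\nabla\tilde u=\widetilde{\nabla u}$ is precisely what membership in $H^1_0(\Omega)$ (as opposed to $H^1(\Omega)$) provides, and boundedness of $\Omega$ is what makes tightness automatic.
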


See \cite{Rabinowitz} or \cite{Willem} for more information on variational methods for
nonlinear elliptic problems.

When $\Omega$ is unbounded, for instance $\Omega=\R^n$, the situation is
different, since the embedding \eqref{Sobolev-embedding} is continuous but not compact, due to the invariance of the 
$H^{s,p}$ and $L^q$ norms under
translations. Indeed, if we choose $u \in C_0^\infty(\Omega)$ the sequence
$$ u_n(x)= u(x+n v) $$
where $v\neq 0$ is a fixed vector in $\R^n$, is a bounded sequence in
$H^{s,p}(\R^n)$ without any convergent subsequence.

Assume now that we want to find solutions of the analogous subcritical problem
\be -\Delta u + u = u^{q-1} \quad 2 < q < 2^*=\frac{2n}{n-1} 
\label{elliptic-rn} \ee
in $\R^n$. Due to the lack of compactness of the Sobolev embedding, the
previous approach does not work. However, we can still get compactness and
hence non-trivial weak solutions, by restricting the energy functional  
$$ J(u)= \frac{1}{2} \int_{\R^n} |\nabla u|^2 + 
 \frac{1}{2} \int_{\R^n} |u|^2 
-\frac{1}{q} \int_{\R^n} |u|^q $$ 
to the subspace $H^1_{rad}(\R^n)$ of functions in $H^1(\R^n)$ with radial 
symmetry,
i.e. such that $u(x)=u_0(|x|)$. Indeed we can make use of the following result, due to
P. L. Lions \cite{Lions}

\begin{theorem}
Let $n \geq 2$. For  $2 < q < 2^*$, the embedding
$$ H^1_{rad}(\R^n) \subset L^q(\R^n) $$
is compact.
\end{theorem}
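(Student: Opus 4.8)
The plan is to exploit the additional decay at infinity that radial symmetry forces on $H^1$ functions, which is precisely what is absent in the general, translation–invariant setting. The crucial tool is the \emph{Strauss radial lemma}: for $n \geq 2$ there is a constant $C = C(n)$ such that every radial $u \in H^1(\R^n)$ satisfies
$$ |u(x)| \leq C\, |x|^{-\frac{n-1}{2}}\, \|u\|_{H^1(\R^n)}, \qquad x \neq 0. $$
I would establish this first for smooth, compactly supported radial $u$ (and then extend by density) by writing, in the radial variable $r = |x|$, the identity $u(r)^2 = -2\int_r^\infty u(\rho)\,u'(\rho)\,d\rho$, inserting the factor $\rho^{n-1}\rho^{-(n-1)}$, bounding $\rho^{-(n-1)} \leq r^{-(n-1)}$ for $\rho \geq r$, and applying Cauchy–Schwarz to recognize $\int_r^\infty |u|\,|u'|\,\rho^{n-1}\,d\rho \lesssim \|u\|_{L^2}\|\nabla u\|_{L^2}$.

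Granting this, let $(u_k)$ be a bounded sequence in $H^1_{rad}(\R^n)$ with $\|u_k\|_{H^1} \leq M$. Since $H^1_{rad}$ is a closed subspace of the Hilbert space $H^1$, after passing to a subsequence we have $u_k \rightharpoonup u$ weakly, with $u$ again radial, and I claim that $u_k \to u$ strongly in $L^q$. The heart of the matter is a tail estimate that is \emph{uniform} in $k$: splitting $|u_k|^q = |u_k|^{q-2}\,|u_k|^2$ and using the Strauss bound together with $q > 2$ gives, for $R \geq 1$,
$$ \int_{|x|>R} |u_k|^q\,dx \;\leq\; \bigl(C M R^{-\frac{n-1}{2}}\bigr)^{q-2} \int_{|x|>R} |u_k|^2\,dx \;\leq\; C'\, M^{q}\, R^{-\frac{(n-1)(q-2)}{2}}, $$
and the same bound holds for the radial limit $u$. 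Since $q>2$ the exponent is strictly positive, so given $\varepsilon>0$ one can fix $R$ once and for all, making both tails smaller than $\varepsilon$ for every $k$ simultaneously.

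With $R$ frozen, the restrictions $u_k|_{B_R}$ are bounded in $H^1(B_R)$, so the Rellich–Kondrachov theorem (whose hypothesis $q < 2^*$ is used here) provides a further subsequence converging strongly in $L^q(B_R)$, necessarily to $u$ by uniqueness of the weak limit. Combining the interior convergence with the uniform smallness of the tails through the triangle inequality yields $\|u_k - u\|_{L^q(\R^n)} \to 0$. The main obstacle is exactly this tail control, which is where both hypotheses enter in an essential way: the argument collapses at the endpoints, since at $q=2$ the exponent $\frac{(n-1)(q-2)}{2}$ vanishes and the uniform smallness is lost, while at $q = 2^*$ compactness genuinely fails through concentration (scaling) phenomena even for radial functions. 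Everything else is soft functional analysis.
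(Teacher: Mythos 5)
Your argument is correct, but it follows a genuinely different route from the one in the paper. The paper states this $H^1$ case in the introduction without proof (attributing it to Lions) and proves the general potential-space version $H^{s,p}_{rad}(\R^n)\subset L^q(\R^n)$, $p<q<p^*$, in Section 7 via the Kolmogorov--Riesz compactness criterion: boundedness in $L^q$ comes from the Sobolev embedding, the $L^q$-equicontinuity of translations comes from writing $u=G_s*f$ and applying Young's inequality to $(\tau_h G_s-G_s)*f$, and the uniform tightness of the tails comes from the weighted embedding theorem (Theorem \ref{embedding-theorem}) with the negative power $|x|^{-\gamma q}$, $\gamma=(n-1)(\frac1q-\frac1p)$. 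You instead derive the tail control from the pointwise Strauss bound $|u(x)|\le C|x|^{-(n-1)/2}\|u\|_{H^1}$ (proved by the fundamental theorem of calculus on the radial profile plus Cauchy--Schwarz), writing $|u_k|^q=|u_k|^{q-2}|u_k|^2$, and you handle the interior by Rellich--Kondrachov on $B_R$ applied to a weakly convergent subsequence; note that your tail exponent $\frac{(n-1)(q-2)}{2}$ is exactly the paper's $-\gamma q$ at $p=2$, so the two tightness estimates are quantitatively identical, only derived differently. Your approach is more elementary and self-contained for $p=2$, $s=1$, but the FTC derivation of the Strauss lemma does not extend to fractional $s$, whereas the paper's convolution-based argument covers all of $H^{s,p}$ uniformly. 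One small polish: rather than extracting a further subsequence on $B_R$ for each $\varepsilon$, observe that the compact embedding $H^1(B_R)\subset L^q(B_R)$ sends the \emph{whole} weakly convergent subsequence to a strongly convergent one, so that $\limsup_k\|u_k-u\|_{L^q(\R^n)}^q\le 2^q\varepsilon$ directly and no diagonal argument is needed.
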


\begin{remark}
When $q=2^*$ we don't have compactness even for radial functions, due to the
rescaling invariance. The same happens for $q=2$, since otherwise the
Laplacian would have a strictly positive principal eigenvalue in $\R^n$. 
\end{remark}

Moreover, the orthogonal group $O(n)$ acts naturally on $H^1(\R^n)$ by
$$ (g \cdot u)(x)= u(g \cdot x) \quad g \in O(n) $$
and the functional $J$ is invariant by this action. Futhermore, $H^1_{rad}$
is precisely the subspace of invariant functions under this action. Hence,
the principle of symmetric criticality \cite{Palais} implies that the critical points of
$J$ on $H^1_{rad}(\R^n)$ are also critical points of $J$ on  $H^1(\R^n)$, and
hence weak solutions of the elliptic problem \eqref{elliptic-rn}. 

This phenomenon of having better embeddings for spaces of radially symmetric 
functions (or more generally, for functions invariant under some subgroup of
the orthogonal group) is well known, and goes back
to the pioneering work by W. M. Ni \cite{Ni} and W. Strauss \cite{Strauss},
who proved the following theorems:

\begin{theorem}[\cite{Ni}]
Let $B$ be the unit ball in $\R^n \; (n\geq 3)$,
Then every function in $H^1_{0,rad}(B)$ is almost everywhere equal to a function
in $C(\overline{B}-\{0\})$ such that
$$ |u(x)| \leq C |x|^{-(n-2)/2} \| \nabla u \|_{L^2}  $$ 
with $C=(\omega_n (n-2))^{-1/2}$,  
$\omega_n$ being the surface area of $\partial B$.
\label{Ni-H1}
\end{theorem}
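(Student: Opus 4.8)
The plan is to reduce the statement to a one-dimensional calculation in polar coordinates, establish the inequality first for smooth compactly supported radial functions, and then pass to a general element of $H^1_{0,rad}(B)$ by density. Writing $u(x)=u_0(|x|)$ and $r=|x|$, I would begin by noting that $\nabla u = u_0'(r)\,x/|x|$, so that $|\nabla u|^2 = |u_0'(r)|^2$ and, integrating over spheres,
\[
\|\nabla u\|_{L^2(B)}^2 = \omega_n \int_0^1 |u_0'(r)|^2\, r^{n-1}\, dr,
\]
where $\omega_n=|\partial B|$ is the surface area of the unit sphere.

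For a radial $u\in C_0^\infty(B)$ the profile $u_0$ is smooth and vanishes near $r=1$, so the fundamental theorem of calculus gives $u_0(r) = -\int_r^1 u_0'(s)\, ds$. Inserting the weight $s^{(n-1)/2}$ and applying the Cauchy--Schwarz inequality yields
\[
|u_0(r)| \le \left(\int_r^1 |u_0'(s)|^2\, s^{n-1}\, ds\right)^{1/2}\left(\int_r^1 s^{-(n-1)}\, ds\right)^{1/2}.
\]
Since $n\ge 3$, the second factor equals $\left(\tfrac{r^{-(n-2)}-1}{n-2}\right)^{1/2}\le (n-2)^{-1/2}\, r^{-(n-2)/2}$, while the first is bounded by $\omega_n^{-1/2}\|\nabla u\|_{L^2}$. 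Multiplying gives exactly the claimed bound with $C=(\omega_n(n-2))^{-1/2}$.

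To pass to an arbitrary $u\in H^1_{0,rad}(B)$, I would use that smooth compactly supported radial functions are dense in this space: starting from any $C_0^\infty(B)$ sequence converging to $u$ in $H^1$ and averaging over the action of $O(n)$ produces radial smooth functions still converging to $u$. Applying the inequality to the differences $u_k-u_m$ shows that the profiles $u_{0,k}$ are uniformly Cauchy on every compact subset of $(0,1]$, hence converge uniformly there to some $\widetilde{u}_0\in C((0,1])$ agreeing almost everywhere with $u_0$; the pointwise estimate then survives in the limit, and $x\mapsto \widetilde{u}_0(|x|)$ is the desired representative in $C(\overline{B}-\{0\})$.

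The main obstacle is not the inequality, which is an elementary Cauchy--Schwarz computation, but the regularity step: one must justify that the radial profile admits a representative that is absolutely continuous on $(0,1]$ and satisfies the Dirichlet condition $u_0(1)=0$, which is what legitimizes the fundamental theorem of calculus. The $O(n)$-averaging density argument resolves both points at once, inheriting the boundary condition from the compactly supported approximants and the continuity from the uniform convergence, so that neither trace theory nor interpolation is needed.
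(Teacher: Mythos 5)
Your proof is correct, and it is essentially Ni's original elementary argument: the paper itself does not reprove this theorem (it quotes it from \cite{Ni}, adding only the remark that by density it suffices to treat smooth radial functions, which is exactly the reduction you carry out via $O(n)$-averaging). Where a comparison is meaningful is with the paper's proof of its own generalization (Theorem \ref{Ni-potential-spaces} and Theorem \ref{Ni-ball}), which proceeds by an entirely different route: writing $u=G_s*f$ (or $u=(-\Delta_B)^{-s/2}f$ on the ball), invoking the pointwise bound $|G_s*f|\leq C\,I^s(|f|)$, and then applying the weighted endpoint estimate for fractional integrals of radial functions (Theorem \ref{theorem-DDD2} together with Remark \ref{remark-infty}). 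Your one-dimensional Cauchy--Schwarz computation, based on $u_0(r)=-\int_r^1 u_0'(s)\,ds$ and the splitting $u_0'(s)=\bigl(u_0'(s)s^{(n-1)/2}\bigr)s^{-(n-1)/2}$, has the advantage of being completely elementary and of producing the sharp explicit constant $C=(\omega_n(n-2))^{-1/2}$ stated in the theorem, which the potential-theoretic route does not yield; its limitation is that it is tied to $p=2$, $s=1$ and the presence of a genuine first derivative, whereas the paper's method extends uniformly to all $1<p<\infty$ and fractional $s$. Your handling of the regularity issue --- proving the estimate for $C_0^\infty$ radial approximants obtained by averaging, then deducing uniform convergence of the profiles on compact subsets of $(0,1]$ and passing to the limit --- is exactly the density mechanism the paper describes after stating Strauss' and Ni's inequalities, so that part is fully aligned with the paper.
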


\begin{theorem}[\cite{Strauss}]
Let $n \geq 2$. Then every function in $H^1_{rad}(\R^n)$ is almost everywhere equal to a function
in $C(\R^{n}-\{0\})$ such that
$$ |u(x)| \leq C \; |x|^{-(n-1)/2} \; \| u \|_{H^1} $$
\end{theorem}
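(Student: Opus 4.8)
The plan is to reduce everything to the one–dimensional radial profile and to combine the fundamental theorem of calculus with the Cauchy--Schwarz inequality. Writing a radial function as $u(x)=u_0(r)$ with $r=|x|$, it suffices to prove the pointwise bound first for $u\in C_0^\infty(\R^n)$ radial and then extend it by density. For such a $u$ the profile $u_0$ is smooth and compactly supported, so that by the fundamental theorem of calculus
\be
u_0(r)^2 = -\int_r^\infty \frac{d}{ds}\bigl(u_0(s)^2\bigr)\,ds = -2\int_r^\infty u_0(s)\,u_0'(s)\,ds .
\ee
The key idea is now to multiply by $r^{n-1}$ and to use that $r\le s$ throughout the integral, so that one may insert $r^{n-1}\le s^{n-1}$ inside:
\be
r^{n-1}u_0(r)^2 \le 2\int_r^\infty s^{n-1}\,|u_0(s)|\,|u_0'(s)|\,ds .
\ee

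Next I would apply the Cauchy--Schwarz inequality with respect to the measure $s^{n-1}\,ds$ to the right-hand side, bounding it by the product of $\bigl(\int_0^\infty s^{n-1}u_0(s)^2\,ds\bigr)^{1/2}$ and $\bigl(\int_0^\infty s^{n-1}u_0'(s)^2\,ds\bigr)^{1/2}$. The point is that, by integration in polar coordinates, these two integrals are exactly $\omega_n^{-1}\|u\|_{L^2}^2$ and $\omega_n^{-1}\|\nabla u\|_{L^2}^2$ respectively (using that $|\nabla u|=|u_0'|$ for radial functions). Therefore
\be
r^{n-1}u_0(r)^2 \le \frac{2}{\omega_n}\,\|u\|_{L^2}\,\|\nabla u\|_{L^2}
\le \frac{1}{\omega_n}\,\bigl(\|u\|_{L^2}^2+\|\nabla u\|_{L^2}^2\bigr)
= \frac{1}{\omega_n}\,\|u\|_{H^1}^2,
\ee
where in the middle step I used $2ab\le a^2+b^2$. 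Taking square roots yields the claimed inequality, in fact with the explicit constant $C=\omega_n^{-1/2}$.

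Finally, to pass from smooth radial functions to an arbitrary $u\in H^1_{rad}(\R^n)$, I would use that radial functions in $C_0^\infty(\R^n)$ are dense in $H^1_{rad}(\R^n)$. Choosing a sequence $u_k\to u$ in $H^1$ and applying the inequality to the differences $u_k-u_j$, one sees that the profiles $(u_{k,0})$ form a Cauchy sequence in the supremum norm on each region $\{|x|\ge\delta\}$; hence they converge uniformly there to a function that is continuous on $\R^n-\{0\}$, represents $u$ almost everywhere, and inherits the inequality in the limit. The main obstacle is precisely this last step rather than the computation: one must check that the limiting continuous representative is well defined on all of $\R^n-\{0\}$ (the bound degenerates as $|x|\to0$, consistent with the fact that radial $H^1$ functions need not be bounded near the origin) and that the density argument is legitimate, while the analytic core is the elementary one-dimensional estimate above.
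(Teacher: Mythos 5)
Your proof is correct, and it is essentially Strauss's original one-dimensional argument, which this paper only cites rather than reproves: the identity $u_0(r)^2=-2\int_r^\infty u_0u_0'\,ds$, the insertion of $r^{n-1}\le s^{n-1}$ (valid since $n\ge 2$), Cauchy--Schwarz against $s^{n-1}ds$, and the polar-coordinate identification of the two factors with $\omega_n^{-1}\|u\|_{L^2}^2$ and $\omega_n^{-1}\|\nabla u\|_{L^2}^2$ are all sound, and your closing density step is exactly the one the paper sketches after stating the theorem. Where you genuinely diverge is from the machinery the paper actually develops to prove its generalizations of this inequality: for $p=2$ (Section 4) the paper writes $u$ via the Fourier inversion formula for radial functions, bounds the Bessel function $J_\nu$ by $Cr^{-1/2}$, and splits into low and high frequencies; for general $p$ (Section 5) it writes $u=G_s*f$ and estimates the convolution dyadically via a lemma on $f*\chi_{B(0,R)}$ for radial $f$. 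Those routes are forced on the paper because the fundamental theorem of calculus is unavailable for fractional smoothness $s$ and the Plancherel step is unavailable for $p\ne 2$. Your approach buys an explicit constant $C=\omega_n^{-1/2}$, complete elementarity, and (before applying $2ab\le a^2+b^2$) the scaling-invariant form $|u(x)|\le C|x|^{-(n-1)/2}\|u\|_{L^2}^{1/2}\|\nabla u\|_{L^2}^{1/2}$ noted in the paper's remarks; the paper's approach buys the extension to $H^{s,p}_{rad}(\R^n)$ with the exponent $-(n-1)/p$ and the interpolation-type estimate in $\|u\|_{L^p}$ and $\|D^su\|_{L^p}$.
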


In both cases, by density, it is enough to establish these inequalities for
smooth radial functions. Once  the inequality is known for smooth functions, if $u_n$ is a sequence of smooth functions convergent to $u$ in $H^1$, the inequality implies that $u_n$ converges uniformly on compact sets of $\R^n-\{0\}$ and hence, $u$ is equal almost everywhere to a continuous function outside the origin that satisfies the same inequality. In the sequel we shall use this observation without further comments.

\medskip 

Even though both inequalities look quite similar at first glance, they have
different features:
\begin{enumerate}
\item Observe that the exponent of $|x|$ in each inequality is different.
As a consequence, for a function in $H^1_{rad}(\R^n)$ the estimate provided by Ni's inequality is sharper near the
origin, whereas Strauss' inequality is better at infinity.
\item Ni's inequality is invariant under scaling. As a consequence, one has a.e.
$$ |u(x)| \leq C |x|^{-(n-2)/2} \| \nabla u \|_{L^2}$$
for $u \in H^1_{rad}(\R^n)$. On the other hand, Strauss' inequality is
not invariant by rescaling. However,  replacing $u$ by $u(\lambda x)$ and
minimizing over $\lambda$, one can get the following rescaling-invariant
version:
$$ |u(x)| \leq C \; |x|^{-(n-1)/2} \;  \| u \|_{L^2}^{1/2} \;  \| \nabla  u 
\|_{L^2}^{1/2}  $$
\end{enumerate}

For elliptic problems other than \eqref{problema-modelo} that  involve other operators,  we need different functional spaces, for instance,  for the case of the $p$-Laplacian $\Delta_p u := \hbox{div} (|\nabla u|^{p-2} \nabla u) $ the natural space is $W^{1,p}_0(\Omega)$ (see, e.g., \cite{DJM}), and for the fractional Laplacian $(-\Delta)^{-s/2}$ the natural space is $H^s(\R^n)$ (see, e.g., \cite{Secchi}). The advantage of working with potential spaces is, as noted above,  that they include all these cases in a unified framework. As  in the case of $H^1$ and $H^1_{rad}$, the corresponding subspaces of radially symmetric fuctions of the above spaces will be denoted by the subscript \emph{rad}.

This paper is organized as follows. In Section 2 and Section 3 we recall some results on fractional integrals and derivatives, respectively. Then we give two easy proofs of  Strauss' inequality for  potential spaces, one for $p=2$ (in Section 4) and another one for general $p$ (in Section 5). Section 6 is devoted to embedding theorems with power weights for radial functions, which include a generalization of Ni's inequalty in $\R^n$. In Section 7 we analyze the compactness of the embeddings, both in the unweighted case (giving an alternative proof of Lions' theorem that avoids the use of complex interpolation) and in the weighted case. Finally, in Section 8 we discuss a generalization of Ni's inequality and embedding theorems for potential spaces in a ball.

\section{Fractional Integral estimates and embedding theorems}

Several ways of proving the Sobolev embedding are known. One of the most
classical (which goes back to the original work by Sobolev), involves the operator 
$$ I^s f(x) = c(n,s) \int_{\R^{n}} \frac{f(y)}{|x-y|^{n-s}} \; dy \quad  
0< s < n $$
(whre $c(n,s)$ is a normalization constant), which is known as fractional integral or Riesz potential.  This operator provides an integral representation of the negative fractional
powers of the Laplacian, i.e:
$$ I^s(f)= (-\Delta)^{-s/2} f $$
for smooth functions $f \in \mathcal{S}(\mathbb{R}^n)$. See also \cite{MSP} for an extension to a space of distributions. 

A basic result on this operator is the following theorem concerning its
behaviour in the classical Lebesgue spaces:

\begin{theorem}[Hardy-Littlewood-Sobolev]
If $p>1$ and $\frac{1}{q}=\frac{1}{p}-\frac{s}{n}$ then
$I^s$ is of strong type $(p,q)$, i.e: there exist $C>0$ such that 
$$  \| I^s f \|_{L^q(\R^n)} \leq C \; \| f \|_{L^p(\R^n)} $$
\label{HLS-theorem}
\end{theorem}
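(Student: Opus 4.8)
The plan is to prove the pointwise \emph{Hedberg inequality} relating $I^s f$ to the Hardy--Littlewood maximal function, and then conclude by invoking the maximal theorem; this route is elementary in the sense of the introduction, since it avoids Marcinkiewicz interpolation and uses only a dyadic decomposition, H\"older's inequality, and an optimization in one real parameter. Write $M f(x) = \sup_{r>0} |B_r(x)|^{-1} \int_{B_r(x)} |f|$ for the (uncentered or centered) maximal function. The idea is to split the Riesz kernel at an arbitrary radius $R>0$, estimating the part near $x$ by $Mf(x)$ and the part far from $x$ by the $L^p$ norm, and then to choose $R$ optimally depending on $x$.

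First I would decompose $|I^s f(x)| \leq c(n,s)\bigl(\int_{|x-y|<R} + \int_{|x-y|\geq R}\bigr) \frac{|f(y)|}{|x-y|^{n-s}}\,dy$. For the near part, summing over dyadic annuli $2^{-j-1}R \leq |x-y| < 2^{-j}R$ and bounding each average of $|f|$ by $Mf(x)$ gives the standard estimate $\int_{|x-y|<R} \frac{|f(y)|}{|x-y|^{n-s}}\,dy \leq C\, R^{s}\, M f(x)$, where the geometric series converges precisely because $s>0$. For the far part, H\"older's inequality with exponents $p, p'$ gives a bound $\|f\|_{L^p}\bigl(\int_{|x-y|\geq R} |x-y|^{-(n-s)p'}\,dy\bigr)^{1/p'}$; passing to polar coordinates, the tail integral converges exactly when $(n-s)p' > n$, which is the hypothesis $sp<n$ rewritten, and evaluates to a constant times $R^{s - n/p}$. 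Thus $|I^s f(x)| \leq C\bigl(R^{s} M f(x) + R^{\,s - n/p}\|f\|_{L^p}\bigr)$, valid for every $R>0$.

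Now I would optimize in $R$ by balancing the two terms, i.e.\ choosing $R = (\|f\|_{L^p}/Mf(x))^{p/n}$, which yields the Hedberg inequality $|I^s f(x)| \leq C\,\|f\|_{L^p}^{1-p/q}\,(M f(x))^{p/q}$, where the exponents are identified through the relation $p/q = 1 - sp/n$ that follows from $\tfrac1q = \tfrac1p - \tfrac sn$. Raising to the power $q$ and integrating gives $\|I^s f\|_{L^q} \leq C\,\|f\|_{L^p}^{1-p/q}\,\|M f\|_{L^p}^{p/q}$, and the Hardy--Littlewood maximal theorem (valid for $p>1$, which is exactly the standing assumption) provides $\|Mf\|_{L^p}\leq C\|f\|_{L^p}$; combining the powers $1-p/q$ and $p/q$ gives $\|I^s f\|_{L^q}\leq C\|f\|_{L^p}$, as claimed.

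The main obstacle is the boundedness of the maximal operator on $L^p$ for $p>1$: strictly speaking this rests on the weak $(1,1)$ estimate via a covering lemma, so to keep the argument fully self-contained I would either cite it as a classical fact or include a short Vitali-covering proof. The remaining work is purely bookkeeping, namely the dyadic estimate of the near part and the careful verification that the exponents $s$, $s-n/p$, $p/q$, and $1-p/q$ combine correctly; the only genuine analytic input beyond the maximal theorem is the convergence of the tail integral, which is guaranteed precisely by $sp<n$.
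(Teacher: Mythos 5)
Your proof is correct: the Hedberg splitting, the dyadic estimate of the near part (using $s>0$), the H\"older estimate of the tail (whose convergence is exactly $sp<n$, equivalently $q<\infty$ in the scaling relation), the optimization in $R$, and the final appeal to the Hardy--Littlewood maximal theorem for $p>1$ all fit together with the right exponents, since $p/q=1-sp/n$. The only degenerate cases ($Mf(x)=0$ or $=\infty$) are trivial and do not affect the argument. There is, however, nothing in the paper to compare against: the authors state the Hardy--Littlewood--Sobolev theorem as a classical black box (citing it implicitly through the standard references such as Stein's book) and never prove it; it is used only as an ingredient in deriving the Sobolev embedding for potential spaces via the pointwise bound $|G_s*f|\leq C\,I^s(|f|)$. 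So your contribution is to supply a self-contained proof where the paper supplies none. The Hedberg route you chose is arguably the most elementary of the standard proofs and is very much in the spirit of the paper's own techniques --- indeed the ``split at a radius $R$, bound each piece, optimize in $R$'' scheme is precisely what the authors do in their proof of Strauss' inequality for $p=2$ (splitting the frequency integral at $r_0$ and minimizing). The one piece of machinery you cannot avoid is the $L^p$ boundedness of $M$, which rests on the weak $(1,1)$ estimate plus interpolation (or a direct Vitali/Calder\'on--Zygmund argument); you flag this honestly, and citing it as classical is entirely consistent with the paper's level of self-containment.
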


We recall that, using this result,  the Sobolev embedding (Theorem 
\ref{Sobolev-embedding}) follows easily. Indeed, it is well know that the Bessel potential
satisfies the the following
asymptotics for $s<n$ (see,  e.g., \cite[Theorem 4]{Calderon}),
\be G_s(x) \simeq C \left\{
\begin{array}{rcl}
|x|^{s-n} & \hbox{if} & |x|\leq 2 \\
e^{-|x|/2} & \hbox{if} & |x|\geq 2 \\
\end{array}
\right. \label{asymptotics} \ee

Since, in particular, $G_s \in L^1(\R^n)$, one immediately gets the  embedding
\be H^{s,p}(\R^n) \subset L^p(\R^n) \; \label{embedding-Lp} \ee

On the other hand, \eqref{asymptotics} gives the pointwise bound 
\be |G_s*f(x)| \leq C \; I^s(|f|)(x) \label{fractional-integral-bound} \ee
whence, using the Hardy-Littlewood-Sobolev  theorem, \eqref{embedding-Lp} 
and H\"older's inequality one obtains the Sobolev embedding \eqref{Sobolev-embedding}.

It is therefore natural to ask ourselves which is the corresponding extension of Theorem \ref{HLS-theorem} in the weighted case. In this article, we
are mainly interested in power weights, for which the the following 
theorem was proved in \cite{SW} by E. Stein and G. Weiss:

\begin{theorem}\cite[Theorem B*]{SW}
\label{stein-weiss}
Let $n\ge 1$, $0<s<n, 1<p<\infty, \alpha<\frac{n}{p'},
\beta<\frac{n}{q}, \alpha + \beta \ge 0$, and
$\frac{1}{q}=\frac{1}{p}+\frac{\alpha+\beta-s}{n}$. If $p\le
q<\infty$, then the inequality
$$
\||x|^{-\beta} I^s f\|_{L^q(\mathbb{R}^n)} \le C \||x|^\alpha
f \|_{L^p(\mathbb{R}^n)}
$$
holds for any $f\in L^p(\mathbb{R}^n, |x|^{p\alpha} dx)$, where $C$ is
independent of $f$.
\end{theorem}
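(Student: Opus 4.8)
The plan is to work directly with the kernel representation. Writing $I^s f(x) = c(n,s)\int_{\R^n} f(y)\,|x-y|^{s-n}\,dy$ and setting $g = |x|^\alpha f$, so that $\|g\|_{L^p} = \||x|^\alpha f\|_{L^p}$, the inequality is equivalent to the $L^p(\R^n)\to L^q(\R^n)$ boundedness of the integral operator $Tg(x) = \int_{\R^n} K(x,y)\,g(y)\,dy$ with kernel $K(x,y) = |x|^{-\beta}\,|y|^{-\alpha}\,|x-y|^{s-n}$. A first observation is that $K$ is homogeneous of degree $-(n-s+\alpha+\beta)$, and the scaling relation $\frac1q = \frac1p + \frac{\alpha+\beta-s}{n}$ is exactly the statement that this degree equals $-n\left(\frac1{p'}+\frac1q\right)$; testing $T$ against dilates $g(\lambda\,\cdot)$ shows this is the \emph{only} homogeneity compatible with an $L^p\to L^q$ bound, so the exponent relation is forced rather than chosen.

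The main step is to split the $y$-integral into three regions according to the size of $|y|$ relative to $|x|$: region~I $\{|y|<|x|/2\}$, region~II $\{|x|/2\le|y|\le 2|x|\}$, and region~III $\{|y|>2|x|\}$, giving $T = T_I + T_{II} + T_{III}$. In regions~I and~III one has $|x-y|\sim|x|$ and $|x-y|\sim|y|$ respectively, so the singular factor is harmless and $T_I,\,T_{III}$ become weighted Hardy operators, of the schematic form $g\mapsto |x|^{s-\beta-n}\int_{|y|<|x|/2}|y|^{-\alpha}g$ and its dual $g\mapsto |x|^{-\beta}\int_{|y|>2|x|}|y|^{s-\alpha-n}g$. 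Reducing to the radial variable via polar coordinates and applying the one-dimensional weighted Hardy inequality, I would check that $L^p\to L^q$ boundedness (for $p\le q$) holds precisely under $\alpha<n/p'$ for $T_I$ and $\beta<n/q$ for $T_{III}$; these are exactly the conditions making the inner H\"older integrals $\int_{|y|<r}|y|^{-\alpha p'}\,dy$ and (by duality, with exponents $q',p'$) the corresponding one for $\beta$ converge. This is where the two pointwise restrictions on $\alpha$ and $\beta$ enter.

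The conceptual crux is region~II, where $|x|\sim|y|$ and the kernel is genuinely that of the Riesz potential. Here I would invoke the Hardy--Littlewood--Sobolev theorem (Theorem~\ref{HLS-theorem}) together with a dyadic decomposition $\R^n=\bigcup_k A_k$, $A_k=\{2^k\le|x|<2^{k+1}\}$. On $A_k$ the weight satisfies $|x|^{-(\alpha+\beta)}\sim 2^{-k(\alpha+\beta)}$ and the interaction stays within neighbouring annuli $\tilde A_k$, so Theorem~\ref{HLS-theorem} gives $\big\|\int_{\tilde A_k} g(y)\,|x-y|^{s-n}\,dy\big\|_{L^{q_0}(A_k)}\lesssim \|g\|_{L^p(\tilde A_k)}$ with $\tfrac1{q_0}=\tfrac1p-\tfrac sn$. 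Since $\alpha+\beta\ge0$ forces $q\le q_0$, H\"older on $A_k$ (of measure $\sim 2^{kn}$) costs a factor $2^{kn(1/q-1/q_0)}=2^{k(\alpha+\beta)}$ that cancels the weight exactly, yielding $\|T_{II}g\|_{L^q(A_k)}\lesssim\|g\|_{L^p(\tilde A_k)}$; summing over $k$ using $p\le q$ (so that $\ell^p\hookrightarrow\ell^q$) and the bounded overlap of the $\tilde A_k$ gives $\|T_{II}g\|_q\lesssim\|g\|_p$. This is precisely where the hypotheses $\alpha+\beta\ge0$ and $p\le q$ are consumed.

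The main obstacle I anticipate is \emph{sharpness}: extracting the endpoint conditions $\alpha<n/p'$ and $\beta<n/q$ from the Hardy pieces, rather than some lossy subrange, requires the exact power-weight Hardy inequality and careful tracking of the scaling exponents so that all four conditions ($\alpha<n/p'$, $\beta<n/q$, $\alpha+\beta\ge0$, $p\le q$) are used simultaneously and optimally. A secondary technical point is the borderline $sp\ge n$, where Theorem~\ref{HLS-theorem} does not apply to region~II; there the local Riesz potential maps $L^p$ boundedly into $L^\infty$ (by H\"older, since $(n-s)p'<n$), and one closes the estimate by H\"older on each finite-measure annulus instead, so the argument goes through with only cosmetic changes.
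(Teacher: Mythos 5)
The paper offers no proof of this statement: it is quoted verbatim as Theorem B* of Stein and Weiss \cite{SW}, so there is no internal argument to compare yours against. Judged on its own, your plan is a correct and essentially complete route to the inequality, along the standard modern lines: split the kernel $|x|^{-\beta}|y|^{-\alpha}|x-y|^{s-n}$ according to $|y|<|x|/2$, $|x|/2\le|y|\le2|x|$, $|y|>2|x|$. The bookkeeping checks out. In region I, both the convergence of $\int_{|y|<r}|y|^{-\alpha p'}\,dy$ and the convergence at infinity of the outer integral reduce, via the scaling relation $\frac{n}{q}=\frac{n}{p}+\alpha+\beta-s$, to the single condition $\alpha<\frac{n}{p'}$, and the resulting Muckenhoupt quotient is homogeneous of degree zero in $r$, so the Hardy constant is finite; dually, region III reduces to $\beta<\frac{n}{q}$. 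In region II the H\"older loss $|A_k|^{1/q-1/q_0}\sim 2^{k(\alpha+\beta)}$ exactly cancels the weight, and $\alpha+\beta\ge0$, $p\le q$ are consumed exactly where you say.

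Two points you should make explicit in a full write-up. First, since $g$ is not radial, reducing $T_I$ and $T_{III}$ to one-dimensional Hardy operators requires replacing $g$ by its spherical integral $G(t)=\int_{S^{n-1}}|g(t\omega)|\,d\omega$ and observing that $\bigl(\int_0^\infty G(t)^p t^{n-1}\,dt\bigr)^{1/p}\lesssim\|g\|_{L^p(\R^n)}$ by H\"older on the sphere. Second, in the borderline case $sp=n$ of region II (where neither Theorem \ref{HLS-theorem} nor your $L^\infty$ bound applies directly), Young's inequality on the annulus needs $\||z|^{s-n}\chi_{\{|z|\lesssim2^k\}}\|_{L^t}<\infty$ with $\frac1t=1+\frac1q-\frac1p$, i.e.\ $t<\frac{n}{n-s}$; this holds because $q<\infty$ together with $sp\ge n$ forces $\alpha+\beta>s-\frac{n}{p}\ge0$ strictly. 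With these details filled in, your argument is a legitimate self-contained proof, in the same elementary spirit as the rest of the paper (it uses only Theorem \ref{HLS-theorem} and one-dimensional weighted Hardy inequalities), whereas the paper simply imports the result.
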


For our purposes, it will be very important to notice that when we restrict 
our attention to functions with radial symmetry, we can get
a better result. Indeed, in \cite{DDD2} the authors proved:

\begin{theorem}
\label{theorem-DDD2}
Let $n\ge 1$, $0<s<n, 1<p<\infty, \alpha<\frac{n}{p'},
\beta<\frac{n}{q}, \alpha + \beta \ge (n-1)(\frac{1}{q}-\frac{1}{p})$, and
$\frac{1}{q}=\frac{1}{p}+\frac{\alpha+\beta-s}{n}$.  If $p\le
q<\infty$, then the inequality
$$
\||x|^{-\beta} I^s f \|_{L^q(\mathbb{R}^n)} \le C \||x|^\alpha
f \|_{L^p(\mathbb{R}^n)}
$$
holds for all radially symmetric $f \in L^p(\mathbb{R}^n, |x|^{p\alpha} dx)$,
where $C$ is independent of $f$.
Moreover, the result also holds for $p=1$ provided $\alpha+\beta > (n-1)(\frac{1}{q}-\frac{1}{p}).$
\end{theorem}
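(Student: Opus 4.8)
The plan is to reduce the $n$-dimensional weighted inequality to a one-dimensional convolution inequality on the multiplicative group $(0,\infty)$, and then to recognize each hypothesis as exactly the condition guaranteeing that the resulting kernel lies in an appropriate (possibly weak) Lebesgue space. Since $|I^s f|\le c(n,s)\,I^s(|f|)$ pointwise, I may assume $f\ge 0$ and radial, say $f(x)=f_0(|x|)$; because $|x-y|^{s-n}$ is $O(n)$-invariant, $I^s f$ is then radial as well. Writing $y=r\omega$ in polar coordinates and evaluating at a point of modulus $\rho$ gives $I^s f(\rho)=c(n,s)\int_0^\infty K(\rho,r)\,r^{n-1}f_0(r)\,dr$, where $K(\rho,r)=\int_{S^{n-1}}|\rho e_1-r\omega|^{s-n}\,d\omega$ is homogeneous of degree $s-n$, so that $K(\rho,r)=\rho^{s-n}k(r/\rho)$ with $k(t)=\int_{S^{n-1}}|e_1-t\omega|^{s-n}\,d\omega$. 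The claim is then equivalent to the one-dimensional estimate $\|\rho^{-\beta}\,Tf_0\|_{L^q(\rho^{n-1}d\rho)}\le C\,\|r^\alpha f_0\|_{L^p(r^{n-1}dr)}$ for the integral operator $T$ with kernel $K$.

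The key step is the logarithmic substitution $\rho=e^u$, $r=e^v$. Setting $F(v)=e^{(n/p+\alpha)v}f_0(e^v)$ and $G(u)=e^{(n/q-\beta)u}Tf_0(e^u)$ turns the two weighted norms into the plain norms $\|F\|_{L^p(dv)}$ and $\|G\|_{L^q(du)}$. A direct computation gives $G(u)=e^{(n/q-\beta+s-n+n/p'-\alpha)u}\int_{-\infty}^\infty \Phi(w)\,F(u+w)\,dw$ with $\Phi(w)=k(e^w)\,e^{(n/p'-\alpha)w}$, and the exponent of the prefactor equals $n(\frac1q-\frac1p)+s-\alpha-\beta$, which vanishes precisely by the scaling relation $\frac1q=\frac1p+\frac{\alpha+\beta-s}{n}$. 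Thus $G$ is, up to reflection, the convolution $\Phi*F$, and by Young's inequality $\|G\|_q\le \|\Phi\|_\ell\|F\|_p$ it suffices to prove $\Phi\in L^\ell(\R)$ with $\frac1\ell=1+\frac1q-\frac1p$.

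It remains to analyze $\Phi$ in the three regimes of $k$. As $w\to-\infty$ (i.e. $t\to0$) one has $k(t)\to|S^{n-1}|$, so integrability near $-\infty$ forces $\alpha<\frac n{p'}$; as $w\to+\infty$ (i.e. $t\to\infty$) one has $k(t)\sim|S^{n-1}|\,t^{s-n}$, so integrability near $+\infty$ forces $\alpha>s-\frac np$, which is exactly the hypothesis $\beta<\frac nq$. The delicate regime is $w\to0$ (i.e. $t\to1$), the near-diagonal singularity: a local computation of the spherical integral (splitting into polar angle $\theta$ near $e_1$, where $|e_1-t\omega|^2\approx(1-t)^2+\theta^2$ and $d\omega\approx\theta^{n-2}d\theta$) shows that for $0<s<1$ one has $k(t)\sim C\,|1-t|^{s-1}$, hence $\Phi(w)\sim C|w|^{s-1}$ near the origin (logarithmic for $s=1$, bounded for $s>1$). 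This is in $L^\ell$ iff $(s-1)\ell>-1$, i.e. iff $\frac1p-\frac1q<s$; a short manipulation using the scaling relation ($\alpha+\beta=s-n(\frac1p-\frac1q)$) shows that $\frac1p-\frac1q\le s$ is equivalent to $\alpha+\beta\ge(n-1)(\frac1q-\frac1p)$, which is automatic once $s\ge1$.

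The main obstacle is the borderline case $\frac1p-\frac1q=s$, that is, equality in $\alpha+\beta\ge(n-1)(\frac1q-\frac1p)$: there $\Phi$ just fails to be in $L^\ell$ but belongs to weak-$L^\ell$, so Young's inequality must be upgraded to its weak-type (Hardy--Littlewood--Sobolev) version, which maps $L^p\to L^q$ \emph{strongly} exactly when $1<p<q<\infty$. This simultaneously covers the equality case for $p>1$ and explains why the statement requires the strict inequality when $p=1$ (there the endpoint convolution bound is only of weak type). The degenerate case $n=1$, where $S^{n-1}$ reduces to two points and $k(t)=|1-t|^{s-1}+|1+t|^{s-1}$, fits the same scheme and recovers the Stein--Weiss condition $\alpha+\beta\ge0$.
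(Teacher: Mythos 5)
This theorem is not proved in the paper at all: it is quoted verbatim from \cite{DDD2} (with alternative proofs attributed to \cite{Rubin} and \cite{D}), so there is no in-paper argument to compare yours against. On its own terms, your proof is sound and is essentially the classical route (closest in spirit to Duoandikoetxea's proof in \cite{D} and to the original Stein--Weiss method): pass to the radial profile, use the homogeneity of the spherically averaged kernel $K(\rho,r)=\rho^{s-n}k(r/\rho)$, and convert the weighted $L^p\to L^q$ bound into Young's inequality on the multiplicative group. All the bookkeeping checks out: the prefactor exponent vanishes exactly by the scaling relation, the decay conditions at $w\to\mp\infty$ are precisely $\alpha<n/p'$ and $\beta<n/q$ (the latter via $\beta=s-\alpha-n(\tfrac1p-\tfrac1q)$), and the near-diagonal condition $(s-1)\ell>-1$ is equivalent to $\alpha+\beta>(n-1)(\tfrac1q-\tfrac1p)$, with the equality case landing exactly on $\Phi\in L^{\ell,\infty}$ and hence on the weak-type Young inequality, which correctly accounts for both the admissibility of equality when $p>1$ and its exclusion when $p=1$. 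The proof in \cite{DDD2} instead splits the one-dimensional operator into the three regions $r\le\rho/2$, $r\ge 2\rho$, $\rho/2<r<2\rho$ and estimates each piece (the off-diagonal ones by Hardy-type bounds, the diagonal one by one-dimensional fractional integration); your version is arguably cleaner because a single convolution inequality absorbs all three regimes. Two points deserve to be written out rather than asserted: (i) the two-sided asymptotics $k(t)\simeq|1-t|^{s-1}$ (resp.\ $\log$, resp.\ bounded) near $t=1$ need a uniform estimate of the spherical integral, not just the heuristic $d\omega\approx\theta^{n-2}d\theta$ --- this is exactly the content of \cite[Lemma 4.1]{DDD2}, which the present paper also invokes in the proof of Lemma \ref{radial-conv}; and (ii) in the borderline case you should explicitly decompose $\Phi=\Phi\chi_{\{|w|<1\}}+\Phi\chi_{\{|w|\ge1\}}$, applying weak-type Young to the first summand and ordinary Young to the exponentially decaying tail, since $\Phi$ itself is a sum of pieces lying in different Lorentz classes.
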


Different proofs of this result were also given by B. Rubin in \cite{Rubin} (except for $p=1$) and by J. Duoandikoetxea in \cite{D}. A more general theorem involving inequalities with angular integrability
was proved by P. D'Ancona and R. Luc\`a in \cite{DL}. 

\begin{remark}
As observed in \cite{DL}, Theorem \ref{theorem-DDD2} also holds for $q=\infty$ (with essentially the same proof) provided that $\alpha+\beta>(n-1)(\frac{1}{p}-\frac{1}{q})$.
\label{remark-infty}
\end{remark}
\section{Riesz Fractional Derivatives}

\label{section-Riesz-derivatives}

It is well known that the classical potential spaces can be characterized in terms of certain hypersingular integrals, which provide a left inverse for
the Riesz potentials. We consider for that purpose, the hypersingular integral 
\be D^s u(x):=\frac{1}{d_{n,l}(s)} \int_{\R^n} \frac{(\Delta^l_y u)(x)}{|y|^{n+s}} \; dy, \quad l>s>0 \label{hypersingular-integral} \ee
where 
$$ (\Delta^l_y u)(x) = \sum_{j=0}^l (-1)^j \binom{l}{j} u(x+(l-j)y) $$  
denotes the finite difference of order $l \in \N$ of the function $u$, and $d_{n,l}(\alpha)$ is a certain normalization constant, which is chosen so that the construction 
\eqref{hypersingular-integral} does not depend on $l$ (see \cite[Chapter 3]{Samko}
for details). We denote by
$$ D_\varepsilon^s u(x) =  \frac{1}{d_{n,l}(s)} \int_{|x|>\varepsilon} 
\frac{(\Delta^l_y u)(x)}{|y|^{n+s}} \; dy $$

Then, we have the following result:
\begin{theorem}\cite[Theorem 26.3]{SKM}
If $u\in L^p(\R^n)$, and $1\le p<n/s$ then
$$ \lim_{\varepsilon \to 0} D_\varepsilon I^s u(x) = u(x) $$
in $L^p(\R^n)$.
\end{theorem}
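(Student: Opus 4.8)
The plan is to show that, for each fixed $\varepsilon>0$, the composition $D^s_\varepsilon I^s$ is convolution against a kernel $k_\varepsilon$, that the family $\{k_\varepsilon\}_{\varepsilon>0}$ is a dilation family forming an approximate identity, and then to invoke the classical approximate-identity theorem in $L^p$. Throughout, the hypothesis $1\le p<n/s$ is exactly what guarantees, via Theorem~\ref{HLS-theorem} (with $1/q=1/p-s/n>0$), that $I^s u$ is a genuine function in $L^q(\R^n)$, so that the finite differences in \eqref{hypersingular-integral} are meaningful and the interchange of integrations below is justified by Fubini's theorem.

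First I would compute the kernel. Writing $I^s u=c(n,s)\,|\cdot|^{-(n-s)}*u$ and inserting this into \eqref{hypersingular-integral}, the finite difference $\Delta^l_y$ falls on the translation variable, and interchanging the order of integration gives $D^s_\varepsilon I^s u=k_\varepsilon * u$ with
\[
k_\varepsilon(w)=\frac{c(n,s)}{d_{n,l}(s)}\int_{|y|>\varepsilon}\frac{1}{|y|^{n+s}}\left(\sum_{j=0}^{l}(-1)^j\binom{l}{j}\,|w+(l-j)y|^{-(n-s)}\right)dy.
\]
The substitution $y=\varepsilon\eta$ together with the homogeneity of the Riesz kernel shows $k_\varepsilon(w)=\varepsilon^{-n}k_1(w/\varepsilon)$, so $\{k_\varepsilon\}$ is indeed a dilation family.

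The core of the argument is to verify the two defining properties of an approximate identity: $k_1\in L^1(\R^n)$ and $\int_{\R^n}k_1=1$. For the normalization I would argue on the Fourier side. Since $\widehat{\Delta^l_y u}(\omega)=(e^{iy\cdot\omega}-1)^l\,\widehat{u}(\omega)$, the multiplier of $D^s_\varepsilon$ is $m_\varepsilon(\omega)=d_{n,l}(s)^{-1}\int_{|y|>\varepsilon}(e^{iy\cdot\omega}-1)^l|y|^{-(n+s)}\,dy$, and a scaling substitution gives $m_\varepsilon(\omega)=\varepsilon^{-s}m_1(\varepsilon\omega)$. Multiplying by the multiplier of $I^s$ (a constant times $|\omega|^{-s}$) one gets $\widehat{k_\varepsilon}(\omega)=\widehat{k_1}(\varepsilon\omega)$, so $\int_{\R^n}k_1=\widehat{k_1}(0)=\lim_{\xi\to0}\widehat{k_1}(\xi)$, and the normalizations $c(n,s)$ and $d_{n,l}(s)$ (the latter being precisely the constant that makes \eqref{hypersingular-integral} independent of $l$) are chosen so that this limit equals $1$. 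The subtle point is that the integral defining $m_1(\xi)$ is governed by the region $|y|\sim|\xi|^{-1}$ rather than by small $y$, which is why $m_1(\xi)\sim(\text{const})\,|\xi|^s$ as $\xi\to0$ and the product symbol tends to $1$ rather than to $0$.

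For the integrability $k_1\in L^1(\R^n)$, the local singularity of $k_1$ near the origin is no worse than $|w|^{-(n-s)}$, inherited from the Riesz kernel, which is integrable because $s>0$. The genuine obstacle is the decay of $k_1(w)$ as $|w|\to\infty$: here one must exploit the cancellation encoded in the finite-difference coefficients $\sum_j(-1)^j\binom{l}{j}(\cdot)$ to show that $k_1$ decays fast enough to be integrable at infinity, and this decay estimate is the main technical difficulty of the proof. Once $\{k_\varepsilon\}$ is known to be an approximate identity, the classical theorem on approximate identities yields $k_\varepsilon*u\to\bigl(\int_{\R^n}k_1\bigr)\,u=u$ in $L^p(\R^n)$ for every $u\in L^p$ and every $1\le p<\infty$ simultaneously, which is exactly the claimed convergence; note that this route treats all admissible $p$ at once, with no need to single out the Hilbertian case $p=2$.
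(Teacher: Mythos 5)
This statement is quoted in the paper directly from \cite[Theorem 26.3]{SKM}; the paper supplies no proof of its own, so the only meaningful question is whether your argument is complete. Your overall strategy is the standard one (and, in fact, the one used in the cited reference): represent $D^s_\varepsilon I^s u$ as $k_\varepsilon * u$, check the dilation identity $k_\varepsilon(w)=\varepsilon^{-n}k_1(w/\varepsilon)$, verify that $\{k_\varepsilon\}$ is an approximate identity, and conclude. The scaling computation and the Fourier-side normalization $\int k_1=\widehat{k_1}(0)=1$ are correct in outline (modulo the fact that continuity of $\widehat{k_1}$ at the origin already presupposes $k_1\in L^1$, so the order of the steps matters), and you correctly identify that the hypothesis $p<n/s$ is only there to make $I^s u$ a well-defined function.

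The genuine gap is the one you yourself flag and then skip: the proof that $k_1\in L^1(\R^n)$, specifically its integrability at infinity. This is not a routine verification that can be deferred --- it is the entire content of the theorem. To see that it is delicate, note that term-by-term estimates of
\[
k_1(w)=C\int_{|y|>1}|y|^{-(n+s)}\sum_{j=0}^{l}(-1)^j\binom{l}{j}\,|w+(l-j)y|^{-(n-s)}\,dy
\]
give only $|k_1(w)|\lesssim |w|^{-n}$ for large $|w|$: the region $1<|y|<|w|/(2l)$ contributes $|y|^l\,|w|^{-(n-s)-l}$ per unit of the measure $|y|^{-(n+s)}dy$, which integrates to $|w|^{-n}$, and each of the regions $|y|\sim|w|$ and $|y|\gg|w|$ also contributes $|w|^{-n}$. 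Since $|w|^{-n}$ is exactly non-integrable at infinity, the bare cancellation bound $|\Delta^l_y g(w)|\le |y|^l\sup|D^lg|$ is insufficient; one must extract an additional power $|w|^{-\delta}$ by expanding the finite difference to the next order and exploiting the angular cancellation and the identity $\sum_j(-1)^j\binom{l}{j}=0$ in the far region. (Indirect evidence that this extra decay must be there: $\widehat{k_1}(\omega)-1=O(|\omega|^{l-s})$ near the origin, which is incompatible with a pure $|w|^{-n}$ tail.) Until this estimate is carried out --- it is essentially \cite[Lemma 26.1]{SKM} --- what you have is a correct plan, not a proof. A secondary, smaller issue: for $p=1$ the Hardy--Littlewood--Sobolev theorem you invoke only gives a weak-type bound, so the Fubini justification for the kernel identity needs to be argued from the a.e.\ absolute convergence of $I^s u$ rather than from $I^s u\in L^q$.
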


Another way of inverting the Riesz potential (which holds for every $s$) is 
given by the Marchaud method (see \cite{Rubin-book}).

Then, we have the following characterization theorem due to E. M. Stein (for
$0<s<2$, \cite{Stein2}) and S. Samko  \cite[Theorem 27.3]{SKM} (see also 
\cite[Proposition 2.1]{Kurokawa}).  

\begin{theorem}
Let $1<p<\infty$, $0<s<\infty$, $l>[(s+1)/2]$ if $s$ is not an odd integer, and $l=s$ if 
$s$ is an odd integer. Then $u \in H^{s,p}(\R^n)$ if and only if
$u \in L^p(\R^n)$ and the limit
$$ D^s u = \lim_{\varepsilon \to 0} D^s_\varepsilon u $$
exists in the norm of $L^p(\R^n)$. Moreover, 
$$ \| u \|_{H^{s,p}} \simeq \| u \|_{L^p} + \| D^s u \|_{L^p}  $$
\label{characterization}
\end{theorem}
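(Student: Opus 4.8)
The plan is to pass to the Fourier side, identify $D^s$ with the fractional power $(-\Delta)^{s/2}$, and then read off the characterization from the classical Mikhlin--H\"ormander multiplier theorem, which is available precisely in the range $1<p<\infty$. First I would work with $u\in\mathcal{S}(\R^n)$ and compute the Fourier transform of the finite difference, using the elementary identity
\[ \widehat{\Delta^l_y u}(\omega)=(e^{iy\cdot\omega}-1)^l\,\widehat{u}(\omega). \]
Inserting this into \eqref{hypersingular-integral} gives, at least formally,
\[ \widehat{D^s u}(\omega)=\frac{1}{d_{n,l}(s)}\left(\int_{\R^n}\frac{(e^{iy\cdot\omega}-1)^l}{|y|^{n+s}}\,dy\right)\widehat{u}(\omega), \]
and by rotational invariance together with the rescaling $y\mapsto|\omega|^{-1}y$ the integral in parentheses equals $d_{n,l}(s)\,|\omega|^s$, the constant $d_{n,l}(s)$ being exactly the normalization fixed in \eqref{hypersingular-integral}. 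This yields the key multiplier identity $\widehat{D^s u}(\omega)=|\omega|^s\,\widehat{u}(\omega)$, so that $D^s$ coincides with $(-\Delta)^{s/2}$.

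With this identity in hand, the characterization reduces to a comparison of Fourier multipliers. Writing $u\in H^{s,p}$ as $u=G_s*f$ with $f=(I-\Delta)^{s/2}u$, so that $\|u\|_{H^{s,p}}=\|f\|_{L^p}$ and $\widehat f=(1+|\omega|^2)^{s/2}\widehat u$, I would introduce the symbol
\[ m(\omega)=\frac{(1+|\omega|^2)^{s/2}}{1+|\omega|^s}, \]
and denote by $m(D)$ the associated Fourier multiplier operator. Then $f=m(D)\,(u+D^s u)$ and, conversely, $u+D^s u=m(D)^{-1}f$, so the norm equivalence $\|f\|_{L^p}\simeq\|u\|_{L^p}+\|D^s u\|_{L^p}$ follows once both $m$ and $1/m$ are shown to be bounded Fourier multipliers on $L^p(\R^n)$ (the elementary bound $\|u\|_{L^p}\le\|G_s\|_{L^1}\|f\|_{L^p}$ coming from $G_s\in L^1(\R^n)$, as in \eqref{embedding-Lp}, disposes of the lower-order term). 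Both $m$ and $1/m$ are smooth away from the origin, tend to $1$ at the origin and at infinity, and near the origin behave like $1+O(|\omega|^{\min(2,s)})$; a direct estimate then shows $|\omega|^{|\alpha|}|\partial^\alpha m(\omega)|\to0$ as $\omega\to0$ and stays bounded everywhere, so the Mikhlin--H\"ormander condition holds for $|\alpha|\le\lfloor n/2\rfloor+1$ and the multiplier theorem applies to both symbols.

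It remains to treat the defining limit and make the preceding rigorous. For $u\in H^{s,p}$ I would show that $D^s_\varepsilon u$ converges in $L^p$ by verifying that the truncated symbols converge to $|\omega|^s$ with uniform Mikhlin bounds when applied to $f\in L^p$; for the converse direction the existence of the limit $g=D^s u$ in $L^p$ is part of the hypothesis, and applying $m(D)$ to $u+g$ produces the function $f$ witnessing $u\in H^{s,p}$. The main obstacle, and the reason for the technical hypotheses on $l$, lies entirely in the first step: establishing the multiplier identity requires that the hypersingular integral converge and that $d_{n,l}(s)$ be nonzero. Near $y=0$ one has $(e^{iy\cdot\omega}-1)^l=O(|y|^l)$, so naive absolute convergence would demand $l>s$; the point is that the cancellation built into the finite difference $\Delta^l_y$ (exploited through the symmetry $y\mapsto-y$ against the even kernel $|y|^{-n-s}$) improves this to the sharp condition $l>[(s+1)/2]$, with the separate prescription $l=s$ needed exactly at the odd integers, where $d_{n,l}(s)$ would otherwise degenerate. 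Making this cancellation and the nonvanishing of $d_{n,l}(s)$ precise is the delicate technical heart of the argument, and here I would follow the regularization scheme of \cite[Chapter 3]{Samko}, where these constants are computed explicitly.
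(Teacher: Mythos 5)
This theorem is not proved in the paper at all: it is quoted as a known result, attributed to E.~M. Stein (for $0<s<2$) and to Samko \cite[Theorem 27.3]{SKM}, so there is no internal proof to compare your argument against. Judged on its own, your outline follows what is essentially the standard route in the literature (identify the symbol of $D^s$ as $|\omega|^s$, then compare $(1+|\omega|^2)^{s/2}$ with $1+|\omega|^s$ via Mikhlin--H\"ormander). The multiplier part is sound: both $m$ and $1/m$ do satisfy $|\omega|^{|\alpha|}|\partial^\alpha m(\omega)|\lesssim 1$ (the worst terms near the origin are $O(|\omega|^{s})$ after multiplying by $|\omega|^{|\alpha|}$, and at infinity both numerator and denominator are symbols of order $s$), and the norm equivalence does follow from $f=m(D)(u+D^su)$, $u+D^su=m(D)^{-1}f$ and $G_s\in L^1$.

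There are, however, two places where the argument as written is genuinely incomplete rather than merely compressed. First, the identity $\widehat{D^su}=|\omega|^s\widehat u$ is exactly the point where the hypotheses on $l$ enter, and under those hypotheses the hypersingular integral need not converge absolutely (e.g. $s=2.5$ allows $l=2<s$, so $(e^{iy\cdot\omega}-1)^l=O(|y|^l)$ is not enough near $y=0$); the conditional convergence of the truncated integrals, the nonvanishing of $d_{n,l}(s)$, and the uniform Mikhlin bounds for the truncated symbols $m_\varepsilon$ are the entire technical content of the theorem, and you defer all of it to \cite{Samko}. That is a legitimate citation, but then your ``proof'' reduces to re-citing the same source the paper already cites. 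Second, and more substantively, the converse direction is dispatched in one sentence: for $u$ merely in $L^p(\R^n)$ with $D^s_\varepsilon u\to g$ in $L^p$, you must justify that $g$ coincides with the distributional $(-\Delta)^{s/2}u$ (so that $m(D)(u+g)$ really equals $(I-\Delta)^{s/2}u$ and not some other $L^p$ function); since $\widehat u$ is only a tempered distribution and the symbol computation was done on $\mathcal{S}(\R^n)$, this requires a closure/duality argument (e.g. testing $D^s_\varepsilon u$ against Schwartz functions and passing to the limit, using that each $D^s_\varepsilon$ is convolution with a finite measure). Without that step the ``if'' implication --- which is the harder half of the equivalence --- is not established.
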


\begin{remark}
For $0<s<2$, $D^s$ coincides with the fractional Laplacian (as defined, e.g., in
 \cite{DPV}) for smooth functions, i.e. we have that
$$ D^s u= (-\Delta)^{s/2} u $$  
\end{remark}

\begin{remark}
For $p=2$, we can express the $L^2$-norm of $D^s u$ in terms of the Fourier
transform (as a consequence of Plancherel's theorem, see, e.g., \cite[Proposition 3.4]{DPV})
$$ \| D^s u \|_{L^2} \simeq \int_{\R^n} |\widehat{u}(u)|^2 |\omega|^{2s} \; d\omega $$
\end{remark}

Other related characterizations of  potential spaces are given in \cite{Strichartz} and
\cite{Dorronsoro}.

\section{A version of Strauss' Lemma for $p=2$ using the Fourier transform}

In this section, we give an elementary proof of  Strauss' inequality for 
$p=2$, namely, 
\begin{theorem} 
Let $u \in H^s_{rad}(\R^n)$ and $s>1/2$. Then $u$ is almost everywhere equal to a continuous function in $\R^n-\{0\}$ that satisfies
$$ |u(x)| \leq C |x|^{-(n-1)/2} \; \| u \|_{L^2}^{1-\frac{1}{2s}} \; \| D^s u \|_{L^2}^{\frac{1}{2s}} $$
\end{theorem}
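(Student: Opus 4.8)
The plan is to exploit the Fourier-analytic characterization of the $H^s$ norm together with the radial symmetry of $u$. For a radial function $u(x)=u_0(|x|)$, its Fourier transform $\widehat u$ is again radial, and can be expressed via a Bessel-type (Hankel) transform of the radial profile. The key idea is to recover a pointwise bound on $u(x)$ by controlling the behaviour of $\widehat u$, using Plancherel and the remarks above that give
$$ \| u \|_{L^2}^2 \simeq \int_{\R^n} |\widehat u(\omega)|^2 \, d\omega, \qquad \| D^s u \|_{L^2}^2 \simeq \int_{\R^n} |\widehat u(\omega)|^2 |\omega|^{2s} \, d\omega. $$
Since the target inequality has the homogeneity of an interpolation (product of the two norms with exponents $1-\tfrac{1}{2s}$ and $\tfrac{1}{2s}$), the first thing I would do is verify that the exponents are precisely the ones forced by scaling, so that it suffices to prove a rescaling-invariant inequality and then optimize.

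First I would reduce to smooth compactly supported radial functions by the density argument already recalled in the introduction. Next, I would write the inversion formula $u(x) = (2\pi)^{-n} \int_{\R^n} \widehat u(\omega) e^{i x\cdot\omega} \, d\omega$ and, using radiality, reduce everything to the radial profile depending only on $r=|x|$ and $\rho=|\omega|$. The central estimate is to bound $|u(x)|$ by splitting the frequency integral into a low-frequency piece $|\omega|\le R$ and a high-frequency piece $|\omega|>R$, and in each piece apply Cauchy--Schwarz against an appropriate weight. The crucial gain over the non-radial case is that for radial functions one can integrate out the angular variables and pick up a decaying factor in $r$: concretely, the spherical average of $e^{ix\cdot\omega}$ over the sphere of radius $\rho$ produces a Bessel function $J_{(n-2)/2}(r\rho)$, whose known decay of order $(r\rho)^{-(n-1)/2}$ is exactly what generates the weight $|x|^{-(n-1)/2}$ in Strauss' inequality.

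I would therefore bound $|u_0(r)|$ by an integral over $\rho\in(0,\infty)$ of $|\widehat u_0(\rho)|$ times the Bessel kernel times the surface-measure factor $\rho^{n-1}$, and estimate the Bessel function by $\min(1,(r\rho)^{-(n-1)/2})$. The condition $s>1/2$ should arise precisely as the integrability threshold that makes the resulting Cauchy--Schwarz estimate (splitting at the natural scale $\rho\sim 1/r$ or optimizing in $R$) converge. After Cauchy--Schwarz one obtains $|u_0(r)|\le C\, r^{-(n-1)/2}\, A^{1-1/(2s)} B^{1/(2s)}$ with $A,B$ the two weighted $L^2$ quantities above, by choosing the splitting radius $R$ to balance the two contributions. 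The main obstacle I anticipate is handling the Bessel-function asymptotics cleanly for all admissible $n$ (including the small-argument regime and the oscillatory regime), and verifying that the exponents coming out of the optimization in $R$ match $1-\tfrac{1}{2s}$ and $\tfrac{1}{2s}$ exactly; this is where careful bookkeeping of the scaling is essential, and it is the step most likely to hide a constant or a borderline integrability issue at $s=1/2$.
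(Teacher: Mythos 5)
Your plan is correct and follows essentially the same route as the paper's proof: Fourier inversion for radial functions via the Bessel kernel $J_{(n-2)/2}$, the uniform decay bound $|J_\nu(t)|\le C t^{-1/2}$ to produce the weight $|x|^{-(n-1)/2}$, a low/high frequency splitting at a radius $r_0$ with Cauchy--Schwarz on each piece (where $s>1/2$ guarantees convergence of $\int_{r_0}^\infty r^{-2s}\,dr$), and optimization over $r_0$ to obtain the exponents $1-\tfrac{1}{2s}$ and $\tfrac{1}{2s}$. The only cosmetic difference is that the paper uses the single uniform bound $|J_\nu(t)|\le Ct^{-1/2}$ for all $t\ge 0$ rather than the two-regime bound $\min(1,t^{-(n-1)/2})$ you anticipate needing, which disposes of the bookkeeping concern you raise.
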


\begin{remark}
For an application of this result  to non-linear equations involving the fractional Laplacian operator see \cite{Secchi}. 
\end{remark}

We will make use of the following well-known lemmas (see, e.g., \cite[Chapter 3]{SWbook}): 

\begin{lemma}[Fourier transform of radial functions]
Let $u \in L^2_{rad}(\R^n)$ be a radial function, $u(x)=u_{rad}(|x|)$.
Then its Fourier transform $\widehat{u}$ is also radial, and it is given by:
$$ \widehat{u}(\omega) = (2\pi)^{n/2} |\omega|^{-\nu} \int_0^\infty u_{rad}(r)
\; J_{\nu}(r|x|) \; r^{n/2}\; dr $$
where $\nu=\frac{n}{2} -1$ and $J_\nu$ denotes the Bessel function of
order $\nu$.
\label{radial-Fourier}
\end{lemma}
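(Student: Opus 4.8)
The plan is to reduce the $n$-dimensional Fourier integral to a one-dimensional (Hankel-type) integral by integrating first over spheres, and then to identify the resulting angular integral with a Bessel function through Poisson's integral representation. I would use the unnormalized convention $\widehat{u}(\omega) = \int_{\R^n} u(x)\, e^{-i x\cdot\omega}\, dx$, under which the asserted constant $(2\pi)^{n/2}$ appears (the variable in $J_\nu$ being $r|\omega|$). First I would record that $\widehat{u}$ is radial: since the Fourier transform commutes with the orthogonal group, $\widehat{u\circ R} = \widehat{u}\circ R$ for every $R\in O(n)$, and radiality of $u$ (that is, $u\circ R = u$) forces $\widehat{u}\circ R = \widehat{u}$. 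Next, passing to polar coordinates $x = r\sigma$ with $r = |x|$ and $\sigma\in S^{n-1}$, I would write
\be
\widehat{u}(\omega) = \int_0^\infty u_{rad}(r)\, r^{n-1} \left( \int_{S^{n-1}} e^{-i r\, \sigma\cdot\omega}\, d\sigma \right) dr,
\ee
so that everything hinges on evaluating the inner angular integral.

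For the angular integral, by the rotational invariance just used I may assume $\omega = |\omega|\, e_1$, so that $\sigma\cdot\omega = |\omega|\,\sigma_1$ depends only on the first coordinate $\sigma_1\in[-1,1]$. Slicing the sphere accordingly, the surface measure decomposes as $d\sigma = (1-\sigma_1^2)^{(n-3)/2}\, d\sigma_1\, d\sigma'$ with $\sigma'\in S^{n-2}$, so that, writing $t = r|\omega|$,
\be
\int_{S^{n-1}} e^{-i r\,\sigma\cdot\omega}\, d\sigma = |S^{n-2}| \int_{-1}^1 e^{-i t \sigma_1}\, (1-\sigma_1^2)^{(n-3)/2}\, d\sigma_1.
\ee
The key step is to recognize the right-hand side as a Bessel function via Poisson's integral representation
\be
J_\nu(t) = \frac{(t/2)^\nu}{\sqrt{\pi}\,\Gamma(\nu+\tfrac12)} \int_{-1}^1 e^{i t\tau}\, (1-\tau^2)^{\nu-1/2}\, d\tau, \qquad \nu = \frac{n}{2}-1,
\ee
whose exponent $\nu-\tfrac12 = (n-3)/2$ matches exactly; this yields $\int_{S^{n-1}} e^{-ir\,\sigma\cdot\omega}\,d\sigma = c_n\, t^{-\nu} J_\nu(t)$ for an explicit constant $c_n$. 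Substituting back and using $r^{n-1}\cdot(r|\omega|)^{-\nu} = |\omega|^{-\nu} r^{n/2}$, I would obtain
\be
\widehat{u}(\omega) = c_n\, |\omega|^{-\nu} \int_0^\infty u_{rad}(r)\, J_\nu(r|\omega|)\, r^{n/2}\, dr,
\ee
and I would pin down $c_n = (2\pi)^{n/2}$ either by tracking the product of Gamma factors $|S^{n-2}|\,\sqrt{\pi}\,\Gamma(\nu+\tfrac12)$ or, more cheaply, by comparing both sides of the angular identity as $t\to 0$, where the left side tends to $|S^{n-1}| = 2\pi^{n/2}/\Gamma(n/2)$ while $t^{-\nu}J_\nu(t) \to (\Gamma(\nu+1)\,2^\nu)^{-1}$.

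The main obstacle is analytic rather than algebraic: for a general $u\in L^2_{rad}(\R^n)$ neither the defining Fourier integral nor the final Hankel integral converges absolutely, so the Fubini interchange above is not literally justified. I would circumvent this by first proving the identity on the dense subspace $L^1_{rad}(\R^n)\cap L^2_{rad}(\R^n)$ (or on radial Schwartz functions), where all integrals converge absolutely and Fubini applies, and then extending to all of $L^2_{rad}(\R^n)$ by continuity: the Fourier transform is an isometry on $L^2(\R^n)$, while the map $u_{rad}\mapsto |\omega|^{-\nu}\int_0^\infty u_{rad}(r)\,J_\nu(r|\omega|)\,r^{n/2}\,dr$ is, up to the constant, an isometry on $L^2((0,\infty), r^{n-1}\,dr)$ by the Plancherel theorem for the Hankel transform (a classical fact independent of the present identity). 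Hence the two sides agree on a dense set and therefore everywhere.
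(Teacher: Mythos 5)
Your proof is correct, but there is nothing in the paper to compare it with: the paper states this lemma as a well-known fact, citing Stein and Weiss, and gives no proof of it. Your argument is precisely the classical one from that reference (Bochner's formula): rotation invariance of the Fourier transform gives radiality; polar coordinates reduce everything to the angular integral $\int_{S^{n-1}} e^{-it\sigma_1}\,d\sigma$; the slicing $d\sigma=(1-\sigma_1^2)^{(n-3)/2}\,d\sigma_1\,d\sigma'$ (valid for $n\ge 2$, the only case relevant here) matches Poisson's representation exactly because $\nu-\tfrac12=\tfrac{n-3}{2}$; and the constant does come out right, since
$$ |S^{n-2}|\,\sqrt{\pi}\,\Gamma\Bigl(\nu+\tfrac12\Bigr)\,2^{\nu} \;=\; \frac{2\pi^{(n-1)/2}}{\Gamma\bigl(\tfrac{n-1}{2}\bigr)}\,\sqrt{\pi}\,\Gamma\Bigl(\tfrac{n-1}{2}\Bigr)\,2^{\frac{n}{2}-1} \;=\; (2\pi)^{n/2}, $$
consistent with your cheaper $t\to 0$ normalization check. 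Your choice of the unnormalized convention $\widehat{u}(\omega)=\int u(x)e^{-ix\cdot\omega}\,dx$ is also the right one: it is the convention under which the paper's formula \eqref{Bessel-potential} for $G_s$ holds. Your treatment of the analytic issue (prove on $L^1\cap L^2$, extend by density) is the standard correct route, with two micro-corrections worth recording: under this convention the Fourier transform is an isometry only up to the factor $(2\pi)^{n/2}$ --- harmless, since the density argument needs only boundedness of both maps, and your bookkeeping showing the modified Hankel map is, up to constant, an isometry on $L^2((0,\infty),r^{n-1}\,dr)$ is right because $n-1-2\nu=1$; and for general $u\in L^2_{rad}$ the identity holds almost everywhere with the right-hand integral understood as an $L^2$-limit of truncations, which is exactly how the paper later uses it (absolute convergence is recovered a posteriori there via the Cauchy--Schwarz splitting into low and high frequencies). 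You also silently corrected the typo in the statement: $J_\nu(r|x|)$ should read $J_\nu(r|\omega|)$.
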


\begin{lemma}[Asymptotics of Bessel functions]
If $\lambda>-1/2$, then there exists a constant $C=C(\lambda)$ such that
$$ |J_\lambda(r)| \leq C \; r^{-1/2} \; \forall \; r \geq 0 $$
\label{Bessel}
\end{lemma}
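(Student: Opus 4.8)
The plan is to reduce the bound on the Bessel function $J_\lambda(r)$ to two regimes: a neighborhood of the origin $0\le r\le 1$, where the function is bounded simply because it is continuous, and the tail $r\ge 1$, where we extract the decay rate $r^{-1/2}$ from the known large-argument asymptotics of Bessel functions. On each regime I want to show that $r^{1/2}|J_\lambda(r)|$ stays bounded by a constant depending only on $\lambda$; gluing the two estimates then yields the claimed uniform bound for all $r\ge 0$.

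First I would treat the near-origin regime. Recall that for $\lambda>-1$ the Bessel function has the convergent power-series representation
$$
J_\lambda(r)=\sum_{m=0}^\infty \frac{(-1)^m}{m!\,\Gamma(m+\lambda+1)}\left(\frac{r}{2}\right)^{2m+\lambda},
$$
so that $J_\lambda(r)\sim \frac{1}{\Gamma(\lambda+1)}(r/2)^\lambda$ as $r\to 0^+$. Since $\lambda>-1/2$, the leading power $r^\lambda$ satisfies $r^{1/2}\cdot r^\lambda=r^{\lambda+1/2}\to 0$, and $r\mapsto r^{1/2}J_\lambda(r)$ is continuous on the compact interval $[0,1]$ (with value $0$ at the endpoint $r=0$). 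A continuous function on a compact set is bounded, so $\sup_{0\le r\le 1} r^{1/2}|J_\lambda(r)|\le C_1(\lambda)<\infty$.

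Next I would handle the tail $r\ge 1$ using the classical asymptotic expansion
$$
J_\lambda(r)=\sqrt{\frac{2}{\pi r}}\left(\cos\!\left(r-\frac{\lambda\pi}{2}-\frac{\pi}{4}\right)+O(r^{-1})\right)\qquad (r\to\infty),
$$
which holds for every real order $\lambda$. The cosine factor is bounded by $1$ and the error term is $O(r^{-1})$, so $|J_\lambda(r)|\le C_2(\lambda)\,r^{-1/2}$ for all sufficiently large $r$, say $r\ge R_0$. On the remaining compact interval $[1,R_0]$ the continuous function $r^{1/2}|J_\lambda(r)|$ is again bounded. Combining the bounds on $[0,1]$, $[1,R_0]$, and $[R_0,\infty)$ and taking $C=\max$ of the three constants gives $r^{1/2}|J_\lambda(r)|\le C$ for all $r\ge 0$, which is exactly the assertion.

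\textbf{Main obstacle.} The only delicate point is the legitimacy of the large-argument asymptotics: the precise hypothesis $\lambda>-1/2$ is what guarantees that the oscillatory leading term decays like $r^{-1/2}$ with a uniformly controlled remainder, and it is also exactly the condition making $r^{1/2}J_\lambda(r)$ continuous (indeed vanishing) at the origin. One should be careful to invoke a form of the asymptotic expansion valid uniformly in $r$ on $[R_0,\infty)$ rather than merely pointwise; this is standard and can be cited from any reference on special functions, so in practice there is no real obstruction, only bookkeeping of the three constants.
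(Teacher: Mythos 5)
Your argument is correct, but note that the paper does not actually prove this lemma: it is quoted as well known, with a pointer to Stein--Weiss \cite{SWbook}, so any written proof is by definition a different route from the paper's. Your two-regime scheme is the standard one and every step is sound: for $\lambda>-1/2$ the power series gives $r^{1/2}J_\lambda(r)=O(r^{\lambda+1/2})\to 0$ as $r\to 0^+$, hence boundedness on $[0,1]$ by continuity, the expansion $J_\lambda(r)=\sqrt{2/(\pi r)}\,\bigl(\cos(r-\lambda\pi/2-\pi/4)+O(r^{-1})\bigr)$ gives the bound on $[R_0,\infty)$, and compactness fills the middle range. One correction to your closing discussion, though: the hypothesis $\lambda>-1/2$ plays no role in the tail --- as you yourself state when introducing the expansion, it is valid for every fixed real order, so the $O(r^{-1/2})$ decay at infinity needs no restriction on $\lambda$; the hypothesis is needed only at the origin, where it forces $r^{\lambda+1/2}\to 0$ (for $\lambda\le -1/2$ the claimed bound genuinely fails as $r\to 0^+$). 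The condition $\lambda>-1/2$ \emph{would} enter a tail estimate if one instead derived the decay from Poisson's integral representation
$$ J_\lambda(r)=\frac{(r/2)^\lambda}{\Gamma(\lambda+\tfrac12)\,\Gamma(\tfrac12)}\int_{-1}^1 e^{irt}\,(1-t^2)^{\lambda-\frac12}\,dt, $$
valid precisely for $\lambda>-1/2$, where it guarantees integrability of the density; estimating this oscillatory integral (by integration by parts or van der Corput) is essentially the route of the cited reference, and it has the advantage of being self-contained --- more in the elementary spirit of this paper --- rather than invoking the full asymptotic expansion as a black box. Finally, your worry about uniformity in $r$ is vacuous for fixed $\lambda$: the $O(r^{-1})$ remainder statement already means a single constant valid for all $r\ge R_0$; uniformity would only be an issue if $\lambda$ were allowed to vary, which it is not here.
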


We observe that $u(x)$ can be reconstructed from $\widehat{u}$ using the Fourier
inversion formula, and that for radial functions (that are even) the
Fourier transform and the inverse Fourier transform coincide up to a
constant factor. Hence, from Lemma \ref{radial-Fourier}, we have that:

$$ |u(x)| \leq C |\omega|^{-n/2+1} \int_0^\infty |(\widehat{u})_{rad}(r)| 
\; |J_\nu(r|\omega)| \; r^{n/2} \; dr $$

Moreover, using Lemma \ref{Bessel}, we have that:

$$ |u(x)| \leq C \; |\omega|^{-(n-1)/2} \int_0^\infty
|(\widehat{u})_{rad}(r)| \; r^{(n-1)/2} \; dr $$
 
In order to bound this expression, we split the integral into parts: the low frequency part from
$0$ to $r_0$, and the high frequency part from $r_0$ to $\infty$, where $r_0$ will be chosen
later, and we bound each part using the Cauchy-Schwarz inequality.

With respect to the low frequency part, we have that:
$$ \int_0^{r_0} |(\widehat{u})_{rad}(r)| 
\; r^{(n-1)/2} \; dr \leq \left(\int_0^{r_0} |\widehat{u}(x)|^2 \; r^{n-1} \; dr
\right)^{1/2} 
\left( \int_0^{r_0} dr \right)^{1/2} \leq 
\| u \|_{L^2} \; r_0^{1/2} $$

With respect to the high frequency part, we have that:

$$ \int_{r_0}^\infty |(\widehat{u})_{rad}(r)| 
\; r^{(n-1)/2} \; dr 
=  \int_{r_0}^\infty |(\widehat{u})_{rad}(r)| 
\; r^{(n-1)/2} \; r^{s} \; r^{-s} \; dr $$
$$ \leq \left( \int_{r_0}^\infty |(\widehat{u})_{rad}(r)|^{2} r^{n-1} r^{2s}
\;dr  \right)^{1/2}
\left( \int_{r_0}^\infty r^{-2s} \; dr \right)^{1/2} 
\leq \| D^su \|_{L^2} \; 
\left( \frac{r_0^{-2s-1}}{2s-1} \right)^{1/2} $$

Summing up, we obtain the following estimate:

$$ |u(x)| \leq C \; |x|^{-(n-1)/2} \left( 
r_0^{1/2} \; \| u \|_{L^2}  + 
\left( \frac{r_0^{-2s-1}}{2s-1} \right)^{1/2}
\; \| D^su \|_{L^2} \right) $$

which implies

\begin{equation}
 |u(x)|^2 \leq C^2 \; |x|^{-(n-1)} \left( 
r_0 \; \| u \|_{L^2}^2  + 
\left( \frac{r_0^{-2s-1}}{2s-1} \right)
\; \| D^su \|_{L^2}^2 \right) 
\label{inequallity-to-minimize}
\end{equation}

Now we apply the following straightforward lemma:

\begin{lemma}
Consider the function of the real variable $f(t)= \alpha t + \beta t^{-\gamma}$ with
$\gamma>0$. Then the function $f$ archives its minimum at the point 
$$ t_0= \left( \frac{\beta \gamma}{\alpha} \right)^{1/(\gamma+1)} $$
at which
$$ f(t_0)= C(\gamma) \alpha^{\gamma/(\gamma+1)} \beta^{1/(\gamma+1)} $$
\end{lemma}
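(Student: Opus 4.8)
The plan is to treat this as a routine single-variable calculus optimization. The function $f(t)=\alpha t + \beta t^{-\gamma}$ is defined for $t>0$ (with $\alpha,\beta,\gamma>0$, as dictated by the application in \eqref{inequallity-to-minimize}, where $\alpha$ and $\beta$ play the roles of the squared-norm coefficients and $\gamma=2s$). First I would observe that $f$ is smooth on $(0,\infty)$, that $f(t)\to+\infty$ as $t\to 0^+$ (because of the $\beta t^{-\gamma}$ term) and as $t\to+\infty$ (because of the $\alpha t$ term), so a global minimum exists in the interior and must occur at a critical point.

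Next I would compute the derivative
$$ f'(t) = \alpha - \beta\gamma\, t^{-\gamma-1}, $$
set it equal to zero, and solve for $t$. This gives $t^{\gamma+1} = \beta\gamma/\alpha$, hence the claimed critical point
$$ t_0 = \left(\frac{\beta\gamma}{\alpha}\right)^{1/(\gamma+1)}. $$
Since $f'$ is strictly increasing (its derivative $f''(t)=\beta\gamma(\gamma+1)t^{-\gamma-2}>0$), the critical point is unique and is indeed the location of the minimum.

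Finally I would substitute $t_0$ back into $f$ to evaluate the minimum value. Writing $t_0 = (\beta\gamma/\alpha)^{1/(\gamma+1)}$ and plugging into $\alpha t_0 + \beta t_0^{-\gamma}$, both terms factor as a constant depending only on $\gamma$ times $\alpha^{\gamma/(\gamma+1)}\beta^{1/(\gamma+1)}$. Tracking the exponents: $\alpha t_0$ contributes $\alpha \cdot \alpha^{-1/(\gamma+1)}\beta^{1/(\gamma+1)}\gamma^{1/(\gamma+1)} = \gamma^{1/(\gamma+1)}\alpha^{\gamma/(\gamma+1)}\beta^{1/(\gamma+1)}$, and a parallel computation for $\beta t_0^{-\gamma}$ yields the same power of $\alpha$ and $\beta$ with a different numerical factor; collecting both gives
$$ f(t_0)= C(\gamma)\,\alpha^{\gamma/(\gamma+1)}\beta^{1/(\gamma+1)}, \qquad C(\gamma)=\gamma^{1/(\gamma+1)}+\gamma^{-\gamma/(\gamma+1)}. $$
I do not anticipate any genuine obstacle here; the only thing to be careful about is the bookkeeping of the fractional exponents when simplifying $\alpha t_0 + \beta t_0^{-\gamma}$, and confirming that the numerical prefactors combine into a single constant $C(\gamma)$ as stated (the paper leaves $C(\gamma)$ unspecified, so I need only exhibit that such a constant exists and depends on $\gamma$ alone).
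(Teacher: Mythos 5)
Your proof is correct and is exactly the routine calculus argument the paper intends: the authors label this lemma ``straightforward'' and omit its proof entirely, and your computation of the critical point, the convexity check, and the evaluation $C(\gamma)=\gamma^{1/(\gamma+1)}+\gamma^{-\gamma/(\gamma+1)}$ all check out. (One immaterial slip in your aside about the application: the paper takes $\gamma=2s-1$, not $\gamma=2s$, but this does not affect the lemma or your proof of it.)
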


So,  taking $\gamma=2s-1$, $\alpha= \; \| u \|_{L^2}^2 $, and 
$\beta= {(2s-1)}^{(2s-1)} \; \| D^su \|_{L^2}^2$ we get that the minimum of
the right hand side of \eqref{inequallity-to-minimize} is achieved at a point 
$r_0$, which gives the required inequality 

$$ |u(x)| \leq C |x|^{-(n-1)/2} \; \| u \|_{L^2}^{1-\frac{1}{2s}} \; 
\| D^s u \|_{L^2}^{\frac{1}{2s}} $$

since

$$ \frac{1}{\gamma+1} = \frac{1}{2s} \quad \frac{\gamma}{\gamma+1}
= \frac{2s-1}{2s} = 1 - \frac{1}{2s} $$

\begin{remark}
A similar technique was used in \cite{Cho-Ozawa} to derive a whole family of related
 inequalities, including a generalization of Ni's inequality. 
\end{remark}

\begin{remark}
We observe that Lemma \ref{radial-Fourier} actually expresses the Fourier
transform of a radial function in terms of the (modified) Hankel transform. As a
consequence, many results on embeddings for spaces of radial functions can
also be derived by using results for the Hankel transform. For instance in 
\cite{DDD1}, we have used this technique to prove Theorem
\ref{embedding-theorem} for $p=2$, using results of  L. De Carli \cite{DeCarli}. 
Also some of the results of this paper can be
alternatively derived from  results obtained by A. Nowak and K. Stempak in \cite{Nowak-Stempak}.
\end{remark}

\section{Proof of Strauss' inequality }

In this section we prove a version of Strauss' inequality for potential spaces for any $p$.  We begin by  the following lemma: 

\begin{lemma}
Let $f \in L^p(\R^n)$ be a radial function, then
$$ |f*\chi_{B(0,R)}(x) | \le C R^{n-1/p } |x|^{-(n-1)/p} \Vert f \Vert_{L^p(\R^n)} $$
\label{radial-conv}
\end{lemma}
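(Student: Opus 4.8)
The plan is to start from the elementary identity $f*\chi_{B(0,R)}(x)=\int_{B(x,R)}f(y)\,dy$, so that $|f*\chi_{B(0,R)}(x)|\le\int_{B(x,R)}|f(y)|\,dy$. Since $f$ is radial and $\chi_{B(0,R)}$ is radial, this last integral depends only on $\rho:=|x|$, and by rotation invariance I may assume $x=\rho e_1$ and write $f(y)=f_0(|y|)$. I would then split the argument according to whether $\rho$ is comparable to $R$ or much larger, using the threshold $\rho<3R$ versus $\rho\ge 3R$. The point of the split is that the radial structure only needs to be exploited in the second regime.

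In the regime $\rho<3R$, a plain application of H\"older's inequality over the ball suffices: $\int_{B(x,R)}|f|\le |B(x,R)|^{1/p'}\,\|f\|_{L^p}=c_n R^{n/p'}\,\|f\|_{L^p}$. Because $\rho<3R$ forces $\rho^{-(n-1)/p}\ge c\,R^{-(n-1)/p}$, the target right-hand side $C R^{n-1/p}\rho^{-(n-1)/p}\|f\|_{L^p}$ is itself of order $R^{n/p'}\|f\|_{L^p}$, so the crude bound already lies below it. This case needs no radial information at all.

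The heart of the matter is the regime $\rho\ge 3R$. Writing the integral in spherical shells gives $\int_{B(x,R)}|f|=\int_0^\infty |f_0(r)|\,A(r)\,dr$, where $A(r)$ is the $(n-1)$-dimensional measure of the cap $\{|y|=r\}\cap B(x,R)$; this cap is empty unless $r\in[\rho-R,\rho+R]$, and on that interval $r\asymp\rho$. I would apply H\"older in the variable $r$ against the natural weight $r^{n-1}$, so that the first factor reproduces $\|f\|_{L^p}$ up to a constant, leaving me to control $\left(\int_0^\infty A(r)^{p'}r^{-(n-1)p'/p}\,dr\right)^{1/p'}$. The key geometric estimate, which I expect to be the main obstacle, is
\[
A(r)\le C\,R^{n-1}\qquad (r\ge 2R).
\]
I would prove it by noting that, for $\rho\ge 3R$, the cap lies in the region $y_1>\rho-R>0$ where the sphere is the graph $y_1=\sqrt{r^2-|y'|^2}$ over the equatorial disk $\{|y'|<R\}$; since $R\le r/2$ there, the area element $r/\sqrt{r^2-|y'|^2}$ is bounded, while the base disk has measure $c_{n-1}R^{n-1}$.

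Inserting $A(r)\le C R^{n-1}$, using $r\asymp\rho$ and that the $r$-interval has length $2R$, the remaining integral is bounded by $C R^{(n-1)p'+1}\rho^{-(n-1)p'/p}$. The final step is the exponent bookkeeping: the identity $(n-1)p'+1=(n-1/p)p'$ (equivalently $p'/p=p'-1$) shows that taking the $1/p'$ power yields exactly $R^{\,n-1/p}\rho^{-(n-1)/p}$, which is the claimed dependence. Combining the two regimes gives the inequality; the endpoint $p=1$ is handled identically using the $L^\infty$ form of H\"older's inequality, and $n=1$ is trivial since the weight $|x|^{-(n-1)/p}$ then disappears.
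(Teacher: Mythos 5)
Your proof is correct and follows essentially the same route as the paper's: polar coordinates, H\"older's inequality against the measure $r^{n-1}\,dr$ on the shell $\rho-R\le r\le\rho+R$, the key bound $CR^{n-1}$ for the measure of the spherical cap $\{|y|=r\}\cap B(x,R)$, and a separate crude H\"older estimate in the regime where $\rho$ is comparable to $R$. The only (cosmetic) divergence is in how the cap bound is obtained: the paper reduces the angular integral to $\omega_{n-2}\int_{t_0}^1(1-t^2)^{(n-3)/2}\,dt$ with $t_0=\frac{r^2+\rho^2-R^2}{2r\rho}$ and estimates $1-t_0\lesssim (R/\rho)^2$ after a change of variables, while you project the cap onto the equatorial disk of radius $R$ and bound the Jacobian of the graph parametrization; both give the same exponent bookkeeping and the same conclusion.
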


\begin{proof}
Observe first that, for $y\in B(x,R)$ we have that $|x|-R\le |y| \le R+|x|$. Now, taking polar coordinates
\begin{align*}
y&=ry^\prime \quad r=|y| \quad y^\prime \in S^{n-1}\\
x&=\rho x^\prime \quad \rho=|x| \quad x^\prime \in S^{n-1}
\end{align*}
we have 
$$ f*\chi_{B(0,R)}(x)=  \int_{\rho - R}^{\rho + R}  f_0(r) \left( \int_{S^{n-1}} \chi_{B(x,R)}(ry^\prime)  \; dy^\prime \right) r^{n-1}\; dr 
$$
To bound the inner integral, observe that  $ \chi_{B(x,R)}(ry^\prime)= \chi_{[t_0,1]}(x^\prime \cdot y^\prime)$ with $t_0=\frac{r^2+\rho^2-R^2}{2r\rho}$, and that $t_0\le 1$ because $\rho-R\le r\le R+\rho$. 

Consider first the case $\rho> 2R$. It follows that $t_0> -1$ and integrating over the sphere
(see  \cite[Lemma 4.1]{DDD2} for details),
$$\int_{S^{n-1}} \chi_{B(x,R)}(ry^\prime)  \; dy^\prime =\int_{S^{n-1}} \chi_{[t_0,1]}(x^\prime \cdot y^\prime)  \; dy^\prime=
\omega_{n-2} \int_{-1}^1 \chi_{[t_0,1]}(t) \left( 1-t^2 \right)^{\frac{n-3}{2}} \; dt. $$
where $\omega_{n-2}$ denotes the area of $S^{n-2}$. Therefore, 
\begin{align*} 
|f*\chi_{B(0,R)}(x)| &= \omega_{n-2} \left|\int_{\rho - R}^{\rho + R}    \int_{t_0}^1 f_0(r)  \left( 1-t^2 \right)^{\frac{n-3}{2}} r^{n-1} \; dt \; dr \right| \\
& \le  \omega_{n-2} \; \left( \int_{\rho - R}^{\rho + R}  |f_0(r)|^p r^{n-1}  \; dr \right)^{1/p} \left( \int_{\rho - R}^{\rho + R}   \left( \int_{t_0}^1   \left( 1-t^2 \right)^{\frac{n-3}{2}}\, dt  \right)^{p^\prime}   r^{n-1} \, dr \right)^{1/p^\prime } \\
& = \omega_{n-2} \;  \|f\|_{L^p(\R^n)} \, A(\rho)
\end{align*}

Notice that 
$$\int_{t_0}^1   \left( 1-t^2 \right)^{\frac{n-3}{2}} \; dt =  \int_{t_0}^1   \left( 1-t \right)^{\frac{n-3}{2}} \left( 1+t \right)^{\frac{n-3}{2}}\; dt \le  C \left( 1 - t_0 \right)^{\frac{n-1}{2}}$$
which implies
\begin{align*} 
A ( \rho ) \le & C \left( \int_{\rho - R}^{\rho + R} \left( 1 - \frac{r^2+\rho^2-R^2}{2r \rho } \right)^{\frac{(n-1)p^\prime}{2}} r^{n-1}  \; dr \right)^{1/p^\prime } \\
\le & C \left( \int_{\rho - R}^{\rho + R}   \left( 1 -  \frac{1+(r / \rho )^2-(R / \rho )^2}{2(r / \rho )} \right)^{\frac{(n-1)p^\prime }{2}} r^{n-1}  \; dr \right)^{1/p^\prime } \\
\le & C \left( \rho^n \int_{1-R/\rho}^{1+R/\rho}   \left( 1 -  \frac{1+(u )^2-(R / \rho )^2}{2u}  \right)^{\frac{(n-1)p^\prime }{2}} u^{n-1}  \; du \right)^{1/p^\prime } \\
= & C \left( \rho^n \int_{1-R/\rho}^{1+R/\rho}   \left(  \frac{ (R / \rho )^2-(1-u)^2}{2u}  \right)^{\frac{(n-1)p^\prime }{2}} u^{n-1}  \; du \right)^{1/p^\prime } \\
\le & C \left( \rho^n \int_{1-R/\rho}^{1+R/\rho}   \left(  \frac{1}{u} \left( R/\rho \right)^2   \right)^{\frac{(n-1)p^\prime }{2}} u^{n-1}  \; du \right)^{1/p^\prime } \\
\le & C R^{n-1}\left( \rho^{n-(n-1)p^\prime} \int_{1-R/\rho}^{1+R/\rho}    u^{( n-1)(1-\frac{p^\prime}{2} )}  \; du \right)^{1/p^\prime }
\end{align*}

Since we are assuming $\rho>2R$, we have that $1-\frac{R}{\rho}>\frac12$   and  
$$ A( \rho) \le C  R^{n-1} \left( \rho^{n-(n-1)p^\prime }  \frac{R}{\rho} \right)^{1/p^\prime} \le  C R^{n-1/p } \rho^{-(n-1)/p}
$$
whence, in this case,
$$ |f*\chi_{B(0,R)}(x) | \le C R^{n-1/p } \rho^{-(n-1)/p} \Vert f \Vert_{L^p(\R^n)} $$
It remains to prove the case $\rho < 2R $, where 
$$|f*\chi_{B(0,R)}(x) | \le C  R^{n/p^\prime}  \Vert f \Vert_{L^p(\R^n)} \le C  R^{n-{1}/{p}} \rho^{-(n-1)/p}
\Vert f \Vert_{L^p(\R^n)}$$

\end{proof}

\begin{theorem}[Strauss' inequality for potential spaces]
Assume that $1<p<\infty$ and $1/p<s<n$. Let $u \in H_{rad}^{s,p}(\R^n)$. 
Then $u$ is almost everywhere equal to a continuous function in $\R^n-\{0\}$ that satisfies
\be |u(x)| \leq C |x|^{-(n-1)/p} \| u \|_{H^{s,p}(\R^n)} 
\label{strauss1} \ee
and, moreover,
\be | u(y) | \leq  |y|^{-(n-1)/p} \| u \|_{L^p}^{1-1/(sp)} \| D^s u \|_{L^p}^{1/(sp)} 
\label{strauss2}
\ee
\label{fractional-Strauss-ineq}
\end{theorem}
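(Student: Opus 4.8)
The plan is to prove \eqref{strauss1} by writing $u$ as a convolution against the Bessel potential and decomposing that kernel into indicators of balls, so that Lemma~\ref{radial-conv} applies directly, and then to deduce the rescaling-invariant estimate \eqref{strauss2} from \eqref{strauss1} by optimizing over dilations. As explained in the introduction, by density it suffices to prove both inequalities for smooth radial $u$, the continuity statement then following from the uniform convergence on compact subsets of $\R^n-\{0\}$.

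First I would write $u=G_s*f$ with $f=(I-\Delta)^{s/2}u\in L^p(\R^n)$ and $\|f\|_{L^p}=\|u\|_{H^{s,p}}$. Since $(I-\Delta)^{s/2}$ is a Fourier multiplier by the radial symbol $(1+|\omega|^2)^{s/2}$, it commutes with rotations, so $f$ is radial. From the integral formula \eqref{Bessel-potential} one reads off that $G_s$ is a smooth, strictly decreasing function of $|x|$ on $(0,\infty)$, since the integrand decreases in $|x|$; hence, by the fundamental theorem of calculus,
\[ G_s(x)=\int_{|x|}^\infty\bigl(-G_s'(R)\bigr)\,dR=\int_0^\infty\bigl(-G_s'(R)\bigr)\,\chi_{B(0,R)}(x)\,dR, \]
with $-G_s'(R)\ge 0$. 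Inserting this into $u=G_s*f$ and interchanging the order of integration (Tonelli, legitimate after passing to absolute values) gives
\[ |u(x)|\le\int_0^\infty\bigl(-G_s'(R)\bigr)\,\bigl|(f*\chi_{B(0,R)})(x)\bigr|\,dR, \]
and Lemma~\ref{radial-conv} then bounds the inner convolution, yielding
\[ |u(x)|\le C\,|x|^{-(n-1)/p}\,\|f\|_{L^p}\int_0^\infty\bigl(-G_s'(R)\bigr)\,R^{\,n-1/p}\,dR. \]

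The main point, and the step I expect to be the real obstacle, is to show that this last integral is finite; this is precisely where the hypothesis $s>1/p$ enters. Integrating by parts and invoking the asymptotics \eqref{asymptotics}, namely $G_s(R)\simeq R^{s-n}$ near $0$ and $G_s(R)\simeq e^{-R/2}$ near $\infty$, the boundary terms $G_s(R)\,R^{\,n-1/p}$ vanish at both endpoints (at the origin because $s>1/p$, at infinity by the exponential decay), reducing the integral to a multiple of $\int_0^\infty G_s(R)\,R^{\,n-1/p-1}\,dR$. This converges at infinity by the exponential decay, while near the origin its integrand behaves like $R^{\,s-1/p-1}$, which is integrable exactly when $s>1/p$. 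This establishes \eqref{strauss1}, since $\|f\|_{L^p}=\|u\|_{H^{s,p}}$.

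Finally, to obtain \eqref{strauss2} I would run a dilation argument. By the characterization $\|u\|_{H^{s,p}}\simeq\|u\|_{L^p}+\|D^su\|_{L^p}$ of Theorem~\ref{characterization}, inequality \eqref{strauss1} reads $|u(x)|\le C|x|^{-(n-1)/p}(\|u\|_{L^p}+\|D^su\|_{L^p})$. Apply this to the dilate $u_\lambda(x):=u(\lambda x)$, for which $\|u_\lambda\|_{L^p}=\lambda^{-n/p}\|u\|_{L^p}$ and, since $D^s$ has symbol $|\omega|^s$ and is therefore homogeneous of degree $s$, $\|D^su_\lambda\|_{L^p}=\lambda^{\,s-n/p}\|D^su\|_{L^p}$. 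Evaluating the resulting bound at $x=y/\lambda$ gives
\[ |u(y)|\le C\,|y|^{-(n-1)/p}\Bigl(\lambda^{-1/p}\|u\|_{L^p}+\lambda^{\,s-1/p}\|D^su\|_{L^p}\Bigr). \]
Minimizing over $\lambda>0$ by the elementary minimization lemma of Section 4 (after the substitution $t=\lambda^{\,s-1/p}$, valid since $s>1/p$, which puts the right-hand side in the form $\beta t+\alpha t^{-\gamma}$ with $\gamma=1/(sp-1)$) produces the exponents $1-\tfrac{1}{sp}$ on $\|u\|_{L^p}$ and $\tfrac{1}{sp}$ on $\|D^su\|_{L^p}$, which is exactly \eqref{strauss2}. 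Once the kernel integral of the second paragraph is controlled, everything else is bookkeeping with scaling exponents.
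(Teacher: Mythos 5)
Your proposal is correct and follows essentially the same route as the paper: both proofs express $G_s$ as a superposition of indicators of balls so that Lemma~\ref{radial-conv} applies, reduce matters to the convergence of $\int_0^\infty G_s(r)\,r^{n-1/p-1}\,dr$ (which holds precisely because $s>1/p$), and then obtain \eqref{strauss2} by the identical dilation-and-optimization argument. The only difference is cosmetic: you use the continuous layer-cake identity $G_s(x)=\int_0^\infty(-G_s'(R))\chi_{B(0,R)}(x)\,dR$ followed by an integration by parts, whereas the paper uses a dyadic decomposition whose Riemann sums converge to the same integral.
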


\begin{proof}
Let $u \in  H^{s,p}(\R^n)$ be a radial function, and $f \in L^p(\R^n)$ radial, such that  $u=G_s*f$. Then, for $a>0$ we have that, by Lemma \ref{radial-conv}

\begin{align*}
G_s*f(x)&= \sum_{k \in \Z } \int_{ \lbrace 2^{k-1}a \le |y| \le 2^k a \rbrace } f(x-y)G_s(y) \; dy \\
&\le  \sum_{k \in \Z } G_s(2^{k-1}a) \int_{ \lbrace   |y| \le 2^k a \rbrace } f(x-y) \; dy \\
&=  \sum_{k \in \Z } G_s(2^{k-1}a)\; f*\chi_{B(0,2^k a)}(x)\\
&\le \sum_{k \in \Z } G_s(2^{k-1}a)\; |f*\chi_{B(0,2^k a)}(x)|\\
& \le C \Vert f \Vert_{L^p(\R^n)}|x|^{-(n-1)/p}
\sum_{k \in \Z } G_s(2^{k-1}a) (2^k a)^{n-1/p } 
\end{align*}
so, letting $r_k=2^{k-1}a$ and $\Delta r_k=r_{k+1}-r_k=2^{k-1}a $, we can write the above sum as
$$ C\sum_{k \in \Z } G_s(r_k) (r_k)^{n-1/p }\Delta r_k \to \int_0^\infty G_s(r)r^{n-1/p }\; dr $$ 
when $a\to 0$, so we obtain
\begin{align*} | G_s*f(x)| \le & C \Vert f \Vert_{L^p(\R^n)}|x|^{-(n-1)/p} \int_0^\infty G_s(r)r^{n-1/p }\; dr \\
= & C \Vert f \Vert_{L^p(\R^n)}|x|^{-(n-1)/p} \int_{\R^n} G_s(x)|x|^{-1/p }\; dx \\
\le & C \Vert f \Vert_{L^p(\R^n)}|x|^{-(n-1)/p}
\end{align*}
where the last inequality holds since we are assuming $s>1/p$.

Therefore, 
$$|u(x)|= | G_s*f(x)| \le  C |x|^{-(n-1)/p}  \Vert f \Vert_{L^p(\R^n)}= C |x|^{-(n-1)/p}\Vert u \Vert_{H^{s,p}(\R^n)}$$
which proves \eqref{strauss1}.

We proceed now to the proof of \eqref{strauss2}. 
Let $u_\lambda(x)= u(\lambda x)$. Then,  using Theorem \ref{characterization}, it is easy to see that 
$u_\lambda \in H^{s,p}(\R^n)$ and, moreover,  we have that 
$$ D^s u_\lambda(x)= \lambda^s D^s u(\lambda x) $$
Hence, applying \eqref{strauss1}, 
\begin{align*}
 | u_\lambda(x) | &  \leq C |x|^{-(n-1)/p} \| u_\lambda \|_{H^{s,p}} \\
& \leq C |x|^{-(n-1)/p}  \left[ \; \| u_\lambda \|_{L^p}^p 
+  \| D^s u_\lambda \|_{L^p}^ p \; \right]^{1/p} \\
&\leq C |x|^{-(n-1)/p}  \left[ \; \lambda^{-n}  \| u \|_{L^p}^p 
+ \lambda^{sp-n} \| D^s u\|_{L^p}^p \; \right]^{1/p} 
\end{align*}

Setting $y=\lambda x$,
\begin{align*}
| u(y) |^p   & \leq  C |y|^{-(n-1)}  \lambda^{n-1} \left[ \; \lambda^{-n}  \| u\|_{L^p}^p 
+ \lambda^{sp-n} \| D^s u \|_{L^p}^p \; \right] \\
& \leq  C |y|^{-(n-1)}  \left[ \; \lambda^{-1}  \| u \|_{L^p}^p 
+ \lambda^{sp-1} \| D^s u \|_{L^p}^p \; \right] 
\end{align*}
Now, we choose $\lambda>0$ such that
$$  \lambda^{-1}  \| u \|_{L^p}^p = \lambda^{sp-1} \| D^s u \|_{L^p}^p $$
i.e.,
$$ \lambda= \left( \frac{\| u \|_{L^p}}{\| D^s u \|_{L^p}} \right)^{1/s} $$
Hence:
$$ | u(y) |^p   \leq  2C |y|^{-(n-1)} \| u \|_{L^p}^{p-1/s} \| D^s u\|_{L^p}^{1/s} $$

$$ | u(y) | \leq  2C |y|^{-(n-1)/p} \| u \|_{L^p}^{1-1/(sp)} \| D^s u \|_{L^p}^{1/(sp)} $$

\end{proof}

\begin{remark}
Inequality \eqref{strauss1} was proved  in \cite{SiSk} in the  more general framework of Triebel-Lizorkin spaces,  using an atomic decomposition adapted to the radial situation, which is much more technically involved. A version of Strauss' inequality for a class of Orlicz-Sobolev spaces is given in \cite{AFS}.
\end{remark}

\section{Embedding theorems with power weights for radial functions}

We begin this section by proving a generalization of Ni's inequality. 

\begin{theorem}[Generalization of Ni's inequality]
Let $1<p<\infty$, $u \in H^s_{rad}(\R^n)$ and $1/p<s<n/p$. Then $u$ is almost everywhere equal to a continuous function in $\R^n-\{0\}$ that satisfies
$$ |u(x)| \leq C |x|^{-\frac{n}{p}+s} \;  \|  u\|_{H^{s,p}(\R^n)} $$
\label{Ni-potential-spaces}
\end{theorem}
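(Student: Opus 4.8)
The plan is to reduce the pointwise bound to the endpoint case $q=\infty$ of the radial weighted fractional-integral inequality, and to read off the required exponent by a direct bookkeeping of the parameters. First I would use the definition of the potential space: write $u=G_s*f$ with $f\in L^p(\R^n)$ radial and $\|u\|_{H^{s,p}(\R^n)}=\|f\|_{L^p(\R^n)}$. Since $u$ is radial, so is $f$, and hence so is $|f|$. The pointwise domination \eqref{fractional-integral-bound}, itself a direct consequence of the asymptotics \eqref{asymptotics}, then gives
\be
|u(x)| = |G_s*f(x)| \le C\, I^s(|f|)(x).
\ee
Thus it suffices to establish the weighted bound $\||x|^{n/p-s}\,I^s(|f|)\|_{L^\infty(\R^n)} \le C\|f\|_{L^p(\R^n)}$.

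The key step is to recognize this as exactly the endpoint $q=\infty$ of the radial Stein--Weiss inequality. I would apply Remark~\ref{remark-infty} (the $q=\infty$ version of Theorem~\ref{theorem-DDD2}) to the radial function $|f|$ with the choice $\alpha=0$ and $\beta=s-\tfrac np$, so that the weight $|x|^{-\beta}=|x|^{n/p-s}$ is precisely the one we need. The scaling identity $\tfrac1q=\tfrac1p+\tfrac{\alpha+\beta-s}{n}$ holds automatically at $q=\infty$, since $\tfrac1p+\tfrac{0+(s-n/p)-s}{n}=\tfrac1p-\tfrac1p=0$. The remaining hypotheses reduce precisely to the standing assumptions: the requirement $\beta<\tfrac nq=0$ is exactly $s<\tfrac np$, the condition $\alpha<\tfrac n{p'}$ is trivial, and the gap condition $\alpha+\beta\ge(n-1)\big(\tfrac1q-\tfrac1p\big)$ of Theorem~\ref{theorem-DDD2}, which becomes strict at the endpoint, reads $s-\tfrac np>-\tfrac{n-1}{p}$, i.e. $s>\tfrac1p$. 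Hence all hypotheses hold exactly under $\tfrac1p<s<\tfrac np$.

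Applying the inequality gives $I^s(|f|)(x)\le C|x|^{s-n/p}\|f\|_{L^p}$ for a.e. $x$, and combining with the displayed domination yields $|u(x)|\le C|x|^{-n/p+s}\|u\|_{H^{s,p}(\R^n)}$, which is the claim; continuity on $\R^n-\{0\}$ follows from the density argument explained in the introduction, applied uniformly on compact subsets of $\R^n-\{0\}$. The only genuinely delicate point is the parameter bookkeeping of the second paragraph: what makes the endpoint work is the \emph{radial} improvement, since the non-radial Theorem~\ref{stein-weiss} would demand $\alpha+\beta\ge0$, i.e. $s\ge\tfrac np$, which is excluded here. Note also that no uniform bound near the origin is asserted---the weight $|x|^{n/p-s}$ degenerates there---so the estimate is genuinely a statement about the behaviour at the origin, consistent with the sharpness of Ni's exponent. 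A dyadic argument in the spirit of Section~5 would also be available, but Lemma~\ref{radial-conv} naturally produces the Strauss exponent $-(n-1)/p$ rather than the Ni exponent, so the weighted fractional-integral route is the efficient one.
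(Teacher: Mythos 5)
Your argument is exactly the paper's proof: write $u=G_s*f$ with $f\in L^p_{rad}$, dominate $|G_s*f|$ pointwise by $C\,I^s(|f|)$ via \eqref{fractional-integral-bound}, and apply the $q=\infty$ endpoint of Theorem \ref{theorem-DDD2} from Remark \ref{remark-infty} with $\alpha=0$, $\beta=s-n/p$. Your explicit parameter bookkeeping (in particular, that the strict gap condition at $q=\infty$ is precisely $s>1/p$ and that the non-radial Stein--Weiss theorem would fail here) is correct and in fact fills in details the paper leaves implicit.
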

\begin{proof}
Let $u \in H^{s,p}_{rad} (\R^n) $, then there exists $f \in L^p_{rad}(\R^n) $ such that $u=G_s*f$, where $G_s$ is as in \eqref{Bessel-potential}.
Using \eqref{fractional-integral-bound} we obtain
$$ \Vert |x|^{ n/p-s} u\Vert_{L^\infty(\R^n)} = \Vert |x|^{n/p-s} (G_s*f) \Vert_{L^\infty(\R^n)}\le  C \Vert |x|^{n/p-s} I^s(|f|) \Vert_{L^\infty(\R^n)}.$$
The above inequality combined with Remark \ref{remark-infty} gives the proof. 
\end{proof}

For $s=1$ and $p=2$ this result coincides with Theorem \ref{Ni-H1} in $\R^n$, and for arbitrary $p$ and $s=1$ gives the result in \cite[Lemma 1]{SWW2}.

Now we can proceed to an immediate extension of the result above, that gives an embedding theorem in the critical case (cf. \cite[Lemma 2]{SWW2} when $s=1$).

\begin{theorem}
Let $1<p<\infty$, $0<s<n/p$,  $c>-n$ be such that   $ (1-sp)c \le (n-1)ps $, and let $p^*_c=\frac{p(n+c)}{n-sp}$. Then
$$ \Vert |x|^{c/p^*_c} u \Vert_{L^{p^*_c}(\R^n)} \le C \; \Vert u\Vert_{H^{s,p}(\R^n)}$$
for any  $u\in H^{s,p}_{rad}(\R^n)$.
\label{embedding-critico}
\end{theorem}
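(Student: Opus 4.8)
The plan is to follow the pattern of the proof of Theorem \ref{Ni-potential-spaces}, replacing the endpoint estimate of Remark \ref{remark-infty} by the finite-exponent estimate of Theorem \ref{theorem-DDD2}. First I would write $u = G_s * f$ with $f \in L^p_{rad}(\R^n)$ and $\|f\|_{L^p} = \|u\|_{H^{s,p}}$. The pointwise comparison \eqref{fractional-integral-bound}, namely $|u(x)| = |G_s * f(x)| \le C\, I^s(|f|)(x)$, then reduces the claim to the weighted fractional integral bound
$$ \big\| |x|^{c/p^*_c}\, I^s(|f|) \big\|_{L^{p^*_c}(\R^n)} \le C\, \|f\|_{L^p(\R^n)}, $$
since $|f|$ is radial whenever $f$ is.

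To obtain this bound I would apply Theorem \ref{theorem-DDD2} to $|f|$ with target exponent $q = p^*_c$ and weights $\alpha = 0$, $\beta = -c/p^*_c$, so that $|x|^{-\beta} = |x|^{c/p^*_c}$ and the source side reduces to $\| |x|^{0} f\|_{L^p} = \|f\|_{L^p}$. It then remains to check that this choice of parameters meets every hypothesis of Theorem \ref{theorem-DDD2}. The scaling identity $\tfrac1q = \tfrac1p + \tfrac{\alpha+\beta-s}{n}$ holds automatically, as it is nothing but the defining relation $p^*_c = \frac{p(n+c)}{n-sp}$ rewritten; the condition $\alpha < n/p'$ is immediate since $\alpha = 0$; and $\beta < n/q$ reduces to $-c < n$, i.e. to the hypothesis $c > -n$.

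The heart of the matter is the radial-improvement hypothesis $\alpha + \beta \ge (n-1)\big(\tfrac1q - \tfrac1p\big)$. Substituting $\alpha = 0$, $\beta = -c/p^*_c$, $q = p^*_c$ and clearing denominators (using $n - sp > 0$), a direct computation shows that this is \emph{exactly} equivalent to the stated constraint $(1-sp)c \le (n-1)ps$; I expect identifying this equivalence to be the conceptual point of the proof, even though the algebra is elementary, since it explains why the constraint on $c$ is the natural one. The step that genuinely requires care is the standing requirement $p \le q < \infty$ in Theorem \ref{theorem-DDD2}: since $p \le p^*_c$ amounts to $c \ge -sp$, one must either restrict to, or separately justify, the range of $c$ in which $q \ge p$ (appealing if necessary to the more general radial fractional integral estimates of \cite{D} and \cite{DL}). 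Once that is in place, Theorem \ref{theorem-DDD2} applies and the displayed chain of inequalities yields $\| |x|^{c/p^*_c} u\|_{L^{p^*_c}} \le C \|u\|_{H^{s,p}}$, completing the proof.
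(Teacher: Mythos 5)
Your proof is essentially identical to the paper's: the same reduction $u=G_s*f$, the same pointwise bound $|G_s*f|\le C\,I^s(|f|)$, and the same application of Theorem \ref{theorem-DDD2} with $q=p^*_c$, $\alpha=0$, $\beta=-c/p^*_c$. If anything you are more careful than the paper, whose proof only verifies $\alpha<n/p'$ and $\beta<n/q$: you explicitly confirm that $\alpha+\beta\ge(n-1)\bigl(\tfrac1q-\tfrac1p\bigr)$ is exactly the hypothesis $(1-sp)c\le(n-1)ps$, and you correctly flag that the requirement $p\le q$, i.e.\ $c\ge -sp$, is not forced by the stated hypotheses when $sp<1$ --- a point the paper's own proof passes over in silence.
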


\begin{proof}
Using the same argument as in the proof of Theorem \ref{Ni-potential-spaces} we obtain
$$ \Vert |x|^{ c/p^*_c} u \Vert_{L^{p^*_c}(\R^n)} \le  C \Vert |x|^{ c/p^*_c} I^s(|f|) \Vert_{L^{p^*_c}(\R^n)}.$$
We apply Theorem  \ref{theorem-DDD2} with $q=p^*_c $, $\alpha =0$ and $\beta = \frac{-c}{p^*_c}$. Clearly, since $\alpha =0$, it holds that  $\alpha < n/p^\prime$, and $\beta= \frac{-c}{p^*_c} < \frac{n}{p^*_c} < \frac{n}{q}$ since $c >-n$ and $ q < p^*_c $.
\end{proof}

To prove the continuity of the embeddings in the subcritical case we will make used of the following weighted convolution theorem for radial functions, proved by the authors in  \cite[Theorem 5]{DD1}, which shows that the analogous result of R. Kerman \cite[Theorem 3.1]{K} for arbitrary functions can be improved in the radial case.

\begin{theorem}
\label{teo-radiales}
$$ \| |x|^{-\gamma} (f*g)(x) \|_{L^r(\R^n)} 
\leq C \| |x|^\alpha f \|_{L^p(\R^n)} \| |x|^\beta g \|_{L^q(\R^n)} $$
for $f$ and $g$ radially symmetric, provided
\begin{enumerate}
\item $\frac{1}{r} = \frac{1}{p} + \frac{1}{q}+ \frac{\alpha + \beta + \gamma}{n} -1 , 1<p,q,r<\infty ,  \frac{1}{r} \le \frac{1}{p} + \frac{1}{q},$
\item $\alpha<\frac{n}{p'} , \beta< \frac{n}{q'} , \gamma < \frac{n}{r},$
\item $\alpha + \beta \ge {(n-1)(1 - \frac{1}{p} - \frac{1}{q})} , \beta + \gamma \ge (n-1)(\frac{1}{r}- \frac{1}{q}), \gamma + \alpha \ge (n-1)(\frac{1}{r} - \frac{1}{p})$
\item  $\max\{\alpha, \beta , \gamma\} > 0$ or $\alpha=\beta=\gamma=0$.
\end{enumerate}

\end{theorem}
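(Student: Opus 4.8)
The plan is to dualize and reduce the statement to a purely one-dimensional kernel estimate. Since the convolution of two radial functions is again radial, the quantity $\||x|^{-\gamma}(f*g)\|_{L^r}$ can be computed by pairing $|x|^{-\gamma}(f*g)$ against radial test functions (averaging an arbitrary test function over spheres can only improve the pairing with a radial function while not increasing its $L^{r'}$ norm). Hence the asserted inequality is equivalent, by duality, to the symmetric trilinear bound
$$\left| \int_{\R^n}\int_{\R^n} f(y)\,g(z)\,h(y+z)\,dy\,dz \right| \le C\,\||x|^\alpha f\|_{L^p}\,\||x|^\beta g\|_{L^q}\,\||x|^\gamma h\|_{L^{r'}}$$
for radial $f,g,h$, where $\tfrac1{r'}=1-\tfrac1r$. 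Writing the constraint as $y+z+w=0$ with $w=-(y+z)$ exhibits this form as completely symmetric in its three arguments, which is exactly what is mirrored in the symmetric structure of hypotheses (1)--(3).

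First I would pass to the radial profiles $f(x)=f_0(|x|)$, $g(x)=g_0(|x|)$, $h(x)=h_0(|x|)$ and integrate out the angular variables. Fixing one unit vector and integrating over the other sphere, the inner spherical average of $h_0$ over $\{\,|r_1 e+r_2\omega|\,\}$ becomes, after the substitution $r_3=|r_1 e+r_2\omega|$, an integral against $(1-t^2)^{(n-3)/2}$, which by Heron's formula is a power of the area $A(r_1,r_2,r_3)$ of the Euclidean triangle with side lengths $r_1,r_2,r_3$. A direct computation of the Jacobians then gives the explicit symmetric representation
$$T(f,g,h)=c_n \int\!\!\int\!\!\int_{\Delta} f_0(r_1)\,g_0(r_2)\,h_0(r_3)\;A(r_1,r_2,r_3)^{\,n-3}\;r_1 r_2 r_3\;dr_1\,dr_2\,dr_3,$$
where $\Delta$ is the set of $(r_1,r_2,r_3)$ satisfying the three triangle inequalities. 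Absorbing the Jacobians $r_i^{\,n-1}$ and the power weights into normalized profiles $F,G,H$ with $\|F\|_{L^p(dr_1)}\simeq\||x|^\alpha f\|_{L^p(\R^n)}$ (and likewise for $G,H$) turns the claim into a one-dimensional weighted trilinear inequality with the explicit triangle kernel.

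The heart of the matter is then a dyadic decomposition $r_i\sim 2^{j_i}$. On $\Delta$ the triangle inequalities force the two largest of $r_1,r_2,r_3$ to be comparable, so the sum splits into three regimes according to which variable is smallest. In each block I would use the area bound $A\lesssim(\text{smallest})\cdot(\text{largest})$, apply H\"older's inequality in the three radial variables, and collect the resulting powers of $2^{j_i}$. The scaling identity (1) makes the diagonal contribution (all three scales comparable) scale invariant, so it is handled by a single application of H\"older; the three inequalities in (3) are precisely what force the geometric series in the three off-diagonal regimes to converge, while the conditions $\alpha<\tfrac{n}{p'}$, $\beta<\tfrac{n}{q'}$, $\gamma<\tfrac{n}{r}$ in (2) guarantee integrability where one variable degenerates to $0$. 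The role of the radial hypothesis is visible exactly in the factor $A^{\,n-3}\lesssim(\text{small}\cdot\text{large})^{\,n-3}$: it supplies additional smallness in the small variable that the generic convolution kernel lacks, and this is the source of the factors $n-1$ appearing on the right-hand sides of (3). In special cases this recovers the radial fractional-integral bound of Theorem \ref{theorem-DDD2}.

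I expect the main obstacle to be the endpoint bookkeeping rather than the overall scheme. When $n=2$ the exponent $n-3$ is negative and $A$ vanishes on the collinear boundary of $\Delta$, so the kernel is singular there and one must verify integrability of $A^{\,n-3}$ against the measure near degenerate triangles. Equally delicate is the behavior at the borderline cases where an inequality in (3) holds with equality, or where $\max\{\alpha,\beta,\gamma\}=0$: there the relevant geometric series is only marginally summable, and this is exactly the point at which hypothesis (4) must be invoked to separate the genuinely weighted situation from the unweighted Young's inequality. Obtaining the \emph{sharp} ranges in (2)--(4), as opposed to merely some admissible range, is the part I would expect to demand the most care.
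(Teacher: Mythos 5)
First, a point of comparison: the paper does not actually prove Theorem \ref{teo-radiales}; it is imported verbatim from \cite[Theorem 5]{DD1}, so there is no in-paper proof to measure your argument against, and I can only judge the proposal on its own terms. The setup you describe is sound as far as it goes. Since $f*g$ is radial, replacing the dual test function by its spherical average leaves the pairing unchanged and does not increase $\| |x|^{\gamma} h\|_{L^{r'}}$, so the inequality is indeed equivalent to the symmetric trilinear bound; the hypotheses (1)--(3) are, as you note, invariant under permuting $(f,p,\alpha)$, $(g,q,\beta)$, $(h,r',\gamma)$. Your representation of $T(f,g,h)$ as an integral over the triangle region against $A(r_1,r_2,r_3)^{n-3}\,r_1r_2r_3\,dr_1dr_2dr_3$ is correct (it follows from the substitution $r_3=|r_1e+r_2\omega|$ together with Heron's formula), and the homogeneity count of the resulting one-dimensional kernel matches the scaling condition (1).

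The genuine gap is that the hard part of the theorem is exactly what your sketch defers. The inequalities in (3) are permitted to hold with \emph{equality}, and in that case the dyadic scheme as described does not close: in the off-diagonal regime governed by, say, $\gamma+\alpha=(n-1)(\frac{1}{r}-\frac{1}{p})$, the block estimates carry no geometric decay in the scale separation, and a sum of the form $\sum_{j_1\le j_2}a_{j_1}b_{j_2}c_{j_2}$ with $a\in\ell^p$, $b\in\ell^q$, $c\in\ell^{r'}$ and $\frac{1}{p}+\frac{1}{q}+\frac{1}{r'}\ge 1$ is divergent for general sequences (take $a$ barely in $\ell^p$). So "the geometric series converges" is precisely the statement that fails at the borderline, and hypothesis (4) --- whose role your argument never actually uses --- must enter the block estimates themselves rather than the final bookkeeping. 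You flag these points as delicate, but the strict-inequality subcase of (3) is the routine part; the closed ranges are the content of the theorem, and establishing them requires a genuinely finer argument (e.g.\ exploiting the width-$r_{\mathrm{small}}$ constraint $|r_2-r_3|\le r_1$ in the strip, or an endpoint Hardy-type inequality) that is not supplied. The low-dimensional cases also need separate treatment: for $n=2$ the kernel $A^{-1}$ is singular on degenerate triangles (locally integrable, behaving like the inverse square root of the distance to $\partial\Delta$, but this must be verified against the weights), and for $n=1$ the area formula degenerates entirely. In short: correct reduction, correct identification of where the factor $n-1$ comes from, but only a plan where the theorem is actually sharp.
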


\begin{theorem}
\label{embedding-theorem}
Let $1<p<\infty$, $0<s<\frac{n}{p}$, $p \leq r \leq p^*_c = \frac{p(n+c)}{n-sp}$.
Then we have a continuous embedding
\be
H^{s,p}_{rad} (\mathbb{R}^n) \subset L^r(\mathbb{R}^n,|x|^c dx)
\label{our-embedding}
\ee
provided that
\be
-sp < c < \frac{(n-1)(r-p)}{p}
\label{cota-del-peso}
\ee
\end{theorem}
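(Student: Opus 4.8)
The plan is to mimic the structure of the proofs of Theorems \ref{Ni-potential-spaces} and \ref{embedding-critico}, but to replace the pointwise fractional–integral bound \eqref{fractional-integral-bound} by the weighted convolution inequality of Theorem \ref{teo-radiales}, which is what captures the extra decay of the Bessel kernel at infinity and hence the full subcritical range. First I would use the definition of the potential space to write $u = G_s * f$ with $f \in L^p_{rad}(\R^n)$ and $\|f\|_{L^p} \simeq \|u\|_{H^{s,p}}$, and observe that the claimed embedding is exactly the estimate $\||x|^{c/r} u\|_{L^r(\R^n)} \le C\|f\|_{L^p(\R^n)}$. Since $u$ is a convolution of two radial functions, I would apply Theorem \ref{teo-radiales} to the pair $(f, G_s)$, assigning weight $0$ and exponent $p$ to $f$, a weight $\beta$ and an auxiliary exponent $q$ to $G_s$, and output weight $-\gamma = c/r$ and exponent $r$ to $u$. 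This gives $\||x|^{c/r}u\|_{L^r} \le C\|f\|_{L^p}\,\||x|^\beta G_s\|_{L^q}$, so the whole task reduces to choosing the parameters so that every hypothesis of Theorem \ref{teo-radiales} holds and the kernel factor $\||x|^\beta G_s\|_{L^q}$ is finite.

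The homogeneity condition (1) of Theorem \ref{teo-radiales} fixes $\beta$ once $q$ is chosen, through $\beta + n/q = n + n/r - n/p + c/r$; in particular the quantity $n/q' - \beta = n/p - (n+c)/r$ is independent of $q$. Using the asymptotics \eqref{asymptotics}, the exponential decay of $G_s$ makes $\||x|^\beta G_s\|_{L^q}$ automatically convergent at infinity, whereas near the origin $G_s(x) \simeq |x|^{s-n}$ forces the single requirement $\beta > n/q' - s$; combined with $\beta < n/q'$ from condition (2) this amounts to $0 < n/p - (n+c)/r < s$, i.e. to $p(n+c)/n < r < p^*_c$. The left inequality here is implied by the upper bound $c < (n-1)(r-p)/p \le n(r-p)/p$, while the right inequality is precisely the strict subcritical condition $r < p^*_c$; the lower bound $c > -sp$ is exactly what guarantees $p^*_c > p$, so that the admissible range $p \le r < p^*_c$ is nonempty.

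It then remains to verify conditions (2), (3) and (4) of Theorem \ref{teo-radiales} together with $1 < q < \infty$. The only $q$-independent inequality among these is the third part of (3), $\gamma + \alpha \ge (n-1)(1/r - 1/p)$, which with $\alpha = 0$ and $\gamma = -c/r$ reduces exactly to the upper bound $c \le (n-1)(r-p)/p$ on the weight. All the remaining inequalities — the other two parts of (3), the positivity condition (4), and $1/r \le 1/p + 1/q$ — hold for $q$ sufficiently large: as $q \to \infty$ one has $\beta \to n - \delta > 0$ with $\delta = n/p - (n+c)/r \in (0,s)$, and each of these inequalities becomes strict in the limit. Choosing such a $q$ and the matching $\beta$ settles the subcritical case $p \le r < p^*_c$; the endpoint $r = p^*_c$, where the kernel factor just fails to converge, is instead covered by Theorem \ref{embedding-critico}, whose hypothesis $(1-sp)c \le (n-1)ps$ matches the present weight condition evaluated at $r = p^*_c$.

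The main obstacle I anticipate is purely bookkeeping: confirming that the feasibility window $n/q' - s < \beta < n/q'$ together with the three inequalities in (3) translates precisely into the stated ranges $-sp < c < (n-1)(r-p)/p$ and $p \le r \le p^*_c$, and that a single (large) choice of $q$ meets every hypothesis at once. A secondary, minor point is reconciling the non-strict inequality $c \le (n-1)(r-p)/p$ produced by condition (3) with the strict inequality in the statement, and dovetailing the convolution argument with the fractional–integral argument of Theorem \ref{embedding-critico} at the critical exponent.
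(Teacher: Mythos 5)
Your proposal is correct and follows essentially the same route as the paper: write $u = G_s * f$, apply the weighted radial convolution theorem (Theorem \ref{teo-radiales}) with $\alpha=0$, $\gamma=-c/r$ and an auxiliary pair $(q,\beta)$ with $q$ large and $\beta$ fixed by the scaling relation, reduce everything to the kernel integrability $\beta > n/q'-s$ near the origin, and defer the endpoint $r=p^*_c$ to the critical-case theorem. The bookkeeping you flag goes through exactly as you predict (the paper parametrizes the same choice via $r=\frac{p(n+c)}{n-sp+\varepsilon p}$ and $\beta=\frac{n}{q'}-s+\varepsilon$).
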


The case $s=1$ was proved by W. Rother \cite{Rother}. A different proof for the case $p=2$ was given by  the authors and R. Dur\'an in \cite{DDD1}, where this embedding theorem is used to prove existence of solutions of a weighted Hamiltonian elliptic system.

\begin{proof}
Notice that the case $r=p_c^*$ corresponds to Theorem \ref{Ni-potential-spaces}. 

For the remaining cases, we can write, as before, $u=G_s *f$ with $f\in L^p_{rad}(\R^n)$. Using Theorem \ref{teo-radiales} we then have  
$$
\||x|^{c/r} G_s * f\|_{L^r} \le C \|f\|_{L^p} \||x|^\beta G_s\|_{L^q}
$$
provided $\frac{1}{r}=\frac{1}{p}+\frac{1}{q}+\frac{\beta-c/r}{n}-1$, $\frac{1}{r}\le \frac{1}{p}+\frac{1}{q}$, $\beta<\frac{n}{q'}$, $-\frac{c}{r} <\frac{n}{r}$, $\beta\ge (n-1)(1-\frac{1}{p}-\frac{1}{q})$, $\beta-\frac{c}{r} \ge (n-1)(\frac{1}{r}-\frac{1}{q})$, $-\frac{c}{r}\ge (n-1)(\frac{1}{r}-\frac{1}{p}) $ and either $\max\{\beta, -\frac{c}{r}\}>0$ or $\beta=-\frac{c}{r}=0$. 

Since we are assuming $r < \frac{p(n+c)}{n-sp}$, there exists $\varepsilon>0$ such that $r=\frac{p(n+c)}{(n-sp+\varepsilon p)}$. For this value of $\varepsilon$ and sufficiently large $q<\infty$ to be chosen later  set $\beta=\frac{n}{q'}-s+\varepsilon$. 

This choice of $\beta$ clearly makes the scaling condition hold, so it suffices to check the remaining conditions:
\begin{itemize}
\item $\frac{1}{r}\le \frac{1}{p}+\frac{1}{q}$ follows from the fact that $p\le r$. 
\item $\beta<\frac{n}{q'}$ is equivalent to $\varepsilon< s$. Let us check that this is  the case: since $(\varepsilon -s)rp =pn +pc - rn$,  it suffices to check that the RHS is negative, but this is equivalent to $c<\frac{n(r-p)}{p}$, which is true because $r\ge p$ and $c<\frac{(n-1)(r-p)}{p}$ by hypothesis.
\item $-\frac{c}{r}<\frac{n}{r}$ is immediate from $c>-ps>-n$.
\item $\beta\ge (n-1)(1-\frac{1}{q}-\frac{1}{p})$ is equivalent to $c>\frac{r}{p}+\frac{r}{q}-r-n$. Since we already know that $c>-n$, it suffices to check that $\frac{r}{p}+\frac{r}{q}-r <0$ which holds for sufficiently large $q$ because $p>1$. We pick any $q$ that satisfies this condition.
\item $\beta-\frac{c}{r}\ge(n-1)(\frac{1}{r}-\frac{1}{q})$ is equivalent to $\frac{1}{q}\le n+\frac{1}{r}$ which trivially holds.
\item $-\frac{c}{r} \ge (n-1)(\frac{1}{r}-\frac{1}{p})$ is equivalent to $c<\frac{(n-1)(r-p)}{p}$ which holds by hypothesis.
\item $\max\{\beta , -\frac{c}{r}\}>0 $ obviously holds for $c<0$, so let us assume  that $c>0$. In this case, using the scaling condition we have that $\frac{\beta}{n}>\frac{1}{r}-\frac{1}{p}-\frac{1}{q}+1$ which is clearly positive since we chose $q$ to satisfy $-\frac{1}{p}-\frac{1}{q}+1>0$.
\end{itemize}

To complete the proof, we have to check that
$\||x|^\beta G_s\|_{L^q} < C$ which, using \eqref{asymptotics},  holds provided 
$$
\int_0^2 r^{\beta q} r^{(s-n)q} r^{n-1} \, dr < +\infty
$$
that is, $\beta > \frac{n}{q'}-s$, which is true by our choice of $\beta$.
\end{proof}

\begin{remark}
A more general form of Theorem \ref{embedding-theorem} can be proved in the context of weighted Triebel-Lizorkin spaces. Indeed, for power weights $|x|^c$ with $-n<c<n(p-1)$ we have that $H^{s,p}(\R^n)=F^s_{p,2}(\R^n)$ and $L^p(\R^n, |x|^c \, dx)=F^0_{p,2}(\R^n, |x|^c \, dx)$. One way of proving the theorem in this setting is by means of a weighted Plancherel-Polya-Nikol'skij type inequality, which in turn makes use of a weighted convolution theorem. In the general setting this approach was used by M. Meyries and M. Veraar in \cite{MV}. In the radial setting, the authors proved in \cite{DD1} that a better Plancherel-Polya-Nikol'skij equality holds in the case of radial functions, which in turn gives better weighted embeddings.

More technically involved proofs use atomic decompositions or wavelet decompositions but have the advantage of allowing more general weights. See, e.g., \cite{HS1, HS2, HP}. In the case of radial functions, this is the subject of a forthcoming paper of the authors and N. Saintier \cite{DDS}. 
\end{remark}

\section{Compactness of the embeddings }

As announced in the introduction, we begin this section by proving the following theorem due to  P.L. Lions \cite{Lions}, that we will need later to prove the compactness in the weighted case. Our proof is  different from the original one of P. L. Lions in \cite{Lions} and avoids the use of the complex method of interpolation.

\begin{theorem}
For $1<p<q<p^*$, we have a compact embedding
$$ H^{s,p}_{rad}(\R^n) \subset L^q(\R^n) $$
for any $s>0$.
\end{theorem}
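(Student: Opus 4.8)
The plan is to establish compactness directly through the Fr\'echet--Kolmogorov (Riesz) criterion rather than through a weak-limit/Rellich argument, which keeps everything in the elementary spirit of the paper. Recall that a bounded family $\mathcal{F}\subset L^q(\R^n)$ is precompact provided it is (i) equicontinuous in the mean, i.e. $\sup_{u\in\mathcal{F}}\|u(\cdot+h)-u\|_{L^q}\to 0$ as $h\to 0$, and (ii) equi-tight, i.e. $\sup_{u\in\mathcal{F}}\int_{|x|>R}|u|^q\,dx\to 0$ as $R\to\infty$. I would take $\mathcal{F}$ to be the image of the unit ball of $H^{s,p}_{rad}(\R^n)$; boundedness of $\mathcal{F}$ in $L^q$ is exactly the continuous Sobolev embedding (Theorem \ref{Sobolev-embedding-theorem}), so it remains to verify (i) and (ii). Throughout I write $u=G_s*f$ with $\|f\|_{L^p}=\|u\|_{H^{s,p}}$ and $f$ radial, I let $\tau_h g(x)=g(x+h)$, and I assume $n\ge 2$ (for $n=1$ radial functions carry no decay and the statement fails).

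For the equicontinuity (i) the key observation is that the difference of translates acts only on the kernel: $u(\cdot+h)-u=(\tau_h G_s-G_s)*f$. Applying Young's convolution inequality with the exponent $r$ determined by $\frac1r=1+\frac1q-\frac1p$ gives $\|u(\cdot+h)-u\|_{L^q}\le \|\tau_h G_s-G_s\|_{L^r}\,\|f\|_{L^p}$. The point is that $G_s\in L^r(\R^n)$: by the asymptotics \eqref{asymptotics}, $G_s$ decays exponentially at infinity and behaves like $|x|^{s-n}$ near the origin, so $G_s\in L^r$ exactly when $r<\frac{n}{n-s}$, and the hypothesis $q<p^*=\frac{pn}{n-sp}$ translates precisely into $\frac1p-\frac1q<\frac sn$, which is the same as $r<\frac{n}{n-s}$. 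Since $q>p$ forces $1<r<\infty$, translation is continuous in $L^r$, whence $\|\tau_h G_s-G_s\|_{L^r}\to 0$ as $h\to 0$; as this bound is uniform over the unit ball of $H^{s,p}$, condition (i) follows. Note that this step does not use radiality.

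Condition (ii) is where radial symmetry enters. I would choose a weight exponent $c$ with $0<c<\frac{(n-1)(q-p)}{p}$ (this interval is nonempty precisely because $q>p$ and $n\ge2$) and apply the radial weighted embedding Theorem \ref{embedding-theorem} with the target exponent equal to $q$, which is legitimate since $q<p^*<p^*_c$ and $-sp<c<\frac{(n-1)(q-p)}{p}$. This yields $\int_{\R^n}|u|^q|x|^c\,dx\le C\|u\|_{H^{s,p}}^q$, and therefore $\int_{|x|>R}|u|^q\,dx\le R^{-c}\int_{\R^n}|u|^q|x|^c\,dx\le C R^{-c}\|u\|_{H^{s,p}}^q$, which tends to $0$ as $R\to\infty$ uniformly over the unit ball. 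With (i) and (ii) in hand, the Fr\'echet--Kolmogorov theorem gives precompactness of $\mathcal{F}$, i.e. the asserted compactness of the embedding.

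The main obstacle is the bookkeeping of exponents rather than any deep analysis: one must confirm that the Young exponent $r$ lands in the range $\bigl(1,\tfrac{n}{n-s}\bigr)$ for which $G_s$ is $L^r$-integrable, and that a strictly positive weight $c$ can be inserted while keeping $q$ below the weighted critical exponent $p^*_c$; both reductions hinge on the strict inequalities $q<p^*$ and $q>p$. A point to emphasize is that radiality is exploited only through Theorem \ref{embedding-theorem} in the tightness step, which is exactly the mechanism that defeats the translation-invariance obstruction responsible for non-compactness in the general (non-radial) case. If one prefers to avoid invoking Fr\'echet--Kolmogorov, an alternative route is to extract a weak limit $u_k\rightharpoonup u$ in the reflexive space $H^{s,p}_{rad}(\R^n)$, reduce to $u=0$, control $|x|>R$ by the same tail estimate and $|x|\le R$ by a local Rellich--Kondrachov compactness; the translation estimate of the second paragraph is essentially a self-contained proof of that local compactness.
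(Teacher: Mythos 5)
Your proposal is correct and follows essentially the same route as the paper: the Kolmogorov--Riesz criterion, with equicontinuity obtained from Young's inequality applied to $(\tau_h G_s-G_s)*f$ and tightness obtained from the weighted radial embedding of Theorem \ref{embedding-theorem}. If anything, your choice of a weight exponent $c$ strictly inside $\bigl(0,\tfrac{(n-1)(q-p)}{p}\bigr)$ is slightly more careful than the paper's, which takes the endpoint value $-\gamma q=\tfrac{(n-1)(q-p)}{p}$ formally excluded by the strict inequality in Theorem \ref{embedding-theorem}.
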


\begin{proof}
Let $(u_n)$ be a bounded sequence in $H^{s,p}_{rad}(\R^n)$,
\be \| u_n \|_{H^{s,p}(\R^n)} \leq M \label{hip} \ee
i.e. $u_n= G_s * f_n$ with radial $f_n$ such that
$$ \| f_n \|_{L^p} \leq M $$

By the Kolmogorov-Riesz theorem (see \cite{HH}) we need to check three conditions:
\begin{enumerate}
\item[i)] $(u_n)$ is bounded in $L^q(\R^n)$. 
This is clear by \eqref{hip} and the continuity of the Sobolev embedding \eqref{Sobolev-embedding}.

\item[ii)] (equicontinuity condition) Let $\tau_h u(x)= u(x+h)$.
Given $\varepsilon>0$, there exists $\delta=\delta(\varepsilon)>0$ such that 
$$ \| \tau_h u_n - u_n \|_{L^q} < \varepsilon $$
if $|h|<\delta$.

This is easy since:
$$ 
\| \tau_h u_n - u_n \|_{L^q} = \| \tau_h(G_s *  f_n) - G_s*f_n \|_{L^q}
$$
$$ = \| (\tau_h G_s - G_s)* f_n \|_{L^q} 
\leq \| \tau_h G_s - G_s \|_{L^r} \| f_n \|_{L^p}  $$
if $$ \frac{1}{q}+1= \frac{1}{r}+\frac{1}{p} $$
by Young's inequality, but, since $G_s \in L^q$, 
$$ \| \tau_h G_s - G_s \|_{L^r} < \frac{\varepsilon}{M} $$
if $|h|< \delta$. We conclude that
$$ \| \tau_h u_n - u_n \|_{L^q} \leq \varepsilon $$
if $|h|<\delta$.

\item[iii)] (tightness condition) Given $\varepsilon>0$, there exists $R=R(\varepsilon)>0$ such
that
$$ \int_{|x|>R} |u_n|^q \; dx < \varepsilon \; \hbox{ for all} \; n \in \N $$

In order to check this condition, we observe that if
$s>\frac{1}{p}-\frac{1}{q}$, by Theorem \ref{embedding-theorem}, we have that
$$ R^{-\gamma q} \int_{|x|>R} |u_n|^q \leq \int_{|x|>R} |x|^{-\gamma q}
|u_n|^q 
\leq C \| u_n \|_{H^{s,p}(\R^n)}^{q} $$
where
$$ \gamma= (n-1)\left(\frac{1}{q} - \frac{1}{p}\right) <0  $$
Hence
\be  
\int_{|x|>R} |u_n|^q  \leq \frac{C} {R^{-\gamma q}} M^q < \varepsilon 
\label{tail-bound}
\ee
if we choose $R=R(\varepsilon)$ large enough.

We observe that $\delta= -\gamma q$ is exactly the exponent in Lions' paper
\cite{Lions} (second equation on page 321), but he proves \eqref{tail-bound} using an
interpolation argument.

The case $s \leq \frac{1}{p}-\frac{1}{q}$ cannot happen under the theorem hypotheses since
$$ q < p^* \Rightarrow s > n \left(\frac{1}{p}-\frac{1}{q} \right) \geq \frac{1}{p}-\frac{1}{q} $$

\end{enumerate}
\end{proof}

\begin{theorem}
The embedding
\begin{equation*}
H^{s,p}_{rad} (\mathbb{R}^n) \subset L^r(\mathbb{R}^n,|x|^c dx)
\end{equation*}
of Theorem \ref{embedding-theorem} is compact, provided that  $1<p<\infty$, $0<s<\frac{n}{p}$, $p < r < p^*_c = \frac{p(n+c)}{n-sp}$ and $-sp < c < \frac{(n-1)(r-p)}{p}$.
\end{theorem}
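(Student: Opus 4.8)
The plan is to show that every bounded sequence $(u_n)$ in $H^{s,p}_{rad}(\R^n)$, say $\|u_n\|_{H^{s,p}}\le M$, has a subsequence converging in $L^r(\R^n,|x|^c\,dx)$, and to split the argument into two independent ingredients: a \emph{uniform tightness} estimate that controls the mass of the $u_n$ simultaneously near the origin and near infinity, and a \emph{local} compactness statement on each annulus, where the weight $|x|^c$ is harmless because it is bounded above and below by positive constants. The strict inequalities $-sp<c<\frac{(n-1)(r-p)}{p}$ and $r<p^*_c$ are precisely what provide the room needed for the tightness.

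First I would establish the tightness by perturbing the weight exponent. Since $c>-sp$ and $r<p^*_c$, the lower endpoint $\max\{-sp,\,\frac{r(n-sp)}{p}-n\}$ is strictly below $c$, so I may pick $c_2$ with $\max\{-sp,\,\frac{r(n-sp)}{p}-n\}<c_2<c$; for this $c_2$ one checks that Theorem \ref{embedding-theorem} still applies at exponent $r$ (indeed $r<p^*_{c_2}$ and $c_2<c<\frac{(n-1)(r-p)}{p}$). Writing $|x|^c=|x|^{c_2}|x|^{c-c_2}$ on $\{|x|<\rho\}$ and using $c-c_2>0$ gives
\be
\int_{|x|<\rho}|u_n|^r|x|^c\,dx \le \rho^{\,c-c_2}\,\||x|^{c_2/r}u_n\|_{L^r}^r \le C\,\rho^{\,c-c_2}M^r \to 0 \quad (\rho\to0),
\ee
uniformly in $n$. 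Symmetrically, since $c<\frac{(n-1)(r-p)}{p}$ strictly I may pick $c_1$ with $c<c_1<\frac{(n-1)(r-p)}{p}$; then $p^*_{c_1}>p^*_c>r$, so Theorem \ref{embedding-theorem} applies at exponent $r$ with weight $c_1$, and on $\{|x|>R\}$ the factor $|x|^{c-c_1}$ with $c-c_1<0$ yields $\int_{|x|>R}|u_n|^r|x|^c\,dx\le C\,R^{\,c-c_1}M^r\to0$ as $R\to\infty$, again uniformly in $n$.

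Next I would produce the local convergence. Fix $q$ with $p<q<\min\{r,p^*\}$ and $r_1$ with $r<r_1\le p^*_c$; both choices are possible because $r>p$ and $r<p^*_c$. Lions' theorem (proved earlier in this section) furnishes a subsequence, still denoted $(u_n)$, converging in $L^q(\R^n)$ to some $u$, hence, after passing to a further subsequence, a.e.\ on $\R^n$. Meanwhile Theorem \ref{embedding-theorem} at exponent $r_1$ and weight $c$ gives the uniform bound $\||x|^{c/r_1}u_n\|_{L^{r_1}}\le CM$. On any annulus $A=\{\rho\le|x|\le R\}$ the weight is comparable to a constant, so $u_n\to u$ in $L^q(A)$ while $(u_n)$ stays bounded in $L^{r_1}(A)$; the interpolation inequality $\|v\|_{L^r(A)}\le\|v\|_{L^q(A)}^{\theta}\|v\|_{L^{r_1}(A)}^{1-\theta}$, valid since $q<r<r_1$, then forces $u_n\to u$ in $L^r(A)$, and hence in $L^r(A,|x|^c\,dx)$.

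Finally I would assemble the pieces. Given $\varepsilon>0$, choose $\rho$ small and $R$ large so that the origin and infinity tails of every $u_n$ are below $\varepsilon$; by Fatou's lemma the same bounds hold for the a.e.\ limit $u$. On the middle annulus the previous step gives $\int_{A}|u_n-u|^r|x|^c\,dx\to0$, while on the two tails the triangle inequality bounds $\int|u_n-u|^r|x|^c$ by a fixed multiple of $\varepsilon$. Letting $\varepsilon\to0$ yields $u_n\to u$ in $L^r(\R^n,|x|^c\,dx)$, which is the asserted compactness. The technical heart of the proof is the tightness step: the verification that the shifted weights $c_1,c_2$ still satisfy the hypotheses of Theorem \ref{embedding-theorem} is exactly where the strictness of the bounds on $c$ and on $r$ is consumed. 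Once uniform tightness at both ends is secured, the local step is entirely routine, requiring only the unweighted compactness of Lions' theorem together with the higher weighted integrability supplied by Theorem \ref{embedding-theorem}.
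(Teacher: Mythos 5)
Your proof is correct, but it is organized differently from the paper's. The paper argues directly with a weakly null sequence and a \emph{single global} H\"older interpolation: writing $r=\theta q+(1-\theta)\tilde r$ with $p<q<r<\tilde r<p^*_c$, it bounds $\int |x|^c|u_n|^r\,dx \le \bigl(\int |u_n|^q\,dx\bigr)^{\theta}\bigl(\int |x|^{\tilde c}|u_n|^{\tilde r}\,dx\bigr)^{1-\theta}$ with $\tilde c=c/(1-\theta)$, then takes $q$ close to $p$ (so $\theta$ is small) to ensure the auxiliary pair $(\tilde r,\tilde c)$ still satisfies the hypotheses of Theorem \ref{embedding-theorem}; the first factor tends to zero by Lions' theorem and the second stays bounded. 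You instead run the standard ``uniform tightness plus local compactness'' scheme: you perturb the weight exponent downward to $c_2$ near the origin and upward to $c_1$ near infinity to get uniform smallness of both tails, and on the intermediate annuli you combine Lions' theorem with ordinary Lebesgue interpolation between $L^q$ and $L^{r_1}$. Both proofs consume exactly the same ingredients (Lions' unweighted compactness and Theorem \ref{embedding-theorem} at perturbed parameters), and your verifications that $c_2$ and $c_1$ exist and satisfy the hypotheses are correct; the strict inequalities $-sp<c$, $c<\frac{(n-1)(r-p)}{p}$ and $r<p^*_c$ are used in your argument exactly where they are used in the paper's choice of $(\tilde r,\tilde c)$. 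The paper's route is shorter because one H\"older step handles origin, infinity and the bulk simultaneously; yours is longer but makes transparent which strict inequality controls which end of the space, and it isolates the local step as a routine consequence of unweighted compactness.
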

\begin{proof}
 It is enough to
show that if $u_n \to 0 $ weakly in $H_{rad}^{s,p}(\R^n)$, then $u_n \to 0$
strongly in $L^r(\R^n,|x|^c \; dx)$. Since 
$$p<r<\frac{p(n+c)}{n-sp}$$ by hypothesis, it is possible
to choose $q$ and $\tilde{r}$ so that $p<q<r<\tilde{r}<\frac{p(n+c)}{n-sp}$.
We write $r = \theta q + (1-\theta) \tilde{r}$ with $\theta \in
(0,1)$ and, using H\"older's inequality, we have that

\be \int_{\R^n} |x|^{c} |u_n|^r \, dx \le \left( \int_{\R^n}
|u_n|^q \, dx \right)^{\theta} \left( \int_{\R^n} |x|^{\tilde{c}}
|u_n|^{\tilde{r}} \, dx \right)^{1-\theta}
\label{ineq-interpolacion} \ee where
$\tilde{c}=\frac{c}{1-\theta}$. By choosing $q$ close enough to
$p$ (hence making $\theta$ small), we can fulfill the conditions
$$ \tilde{r} <\frac{p(n+\tilde{c})}{n-ps},\quad -ps < \tilde{c} < \frac{(n-1)(\tilde{r}-p)}{p}. $$
Therefore, by the imbedding that we have already established:
$$ \left( \int_{\R^n} |x|^{\tilde{c}} |u_n|^{\tilde{r}} \, dx  \right)^{1/\tilde{r}}
\leq C \| u_n \|_{H^{s,p}} \leq C$$

Since the imbedding $H_{rad}^{s,p}(\R^n) \subset L^{q}(\R^n)$ is compact by Lions' theorem, we
have that $u_n \to 0$ in $L^{q}(\R^n)$. From (\ref{ineq-interpolacion}) we conclude
that $u_n \to 0$ strongly in $L^r(\R^n,|x|^c\; dx)$, which shows that the imbedding in our
theorem is also compact. This concludes the proof.
\end{proof}

\section{Ni's inequallity for potential spaces in a ball}

We recall that the original result of Ni in \cite{Ni} was for the case of
the ball. Hence, it is natural to ask if our extension of Ni's inequality also
holds for potential spaces of radial functions in a ball. In this section we
briefly discuss this extension.

Let $B=B(0,R)=\{ x \in \R^n : |x|<R\}$ be a ball. We denote by $\Delta_B$ the
Laplacian operator with Dirichlet conditions. Let $(\lambda_k)_{k \in \N}$ 
be the Dirichlet eigenvalues, with the corresponding orthogonal 
basis of $L^2(B)$ of eigenfunctions $(\lambda_k)_{k \in \N}$. Then the negative powers
of $-\Delta_B$ can be defined in $L^2(B)$ by 
$$ (-\Delta)^{-s/2} f(x)= \sum_{k} \lambda_{k}^{-s/2} 
\langle f, \varphi k \rangle  \varphi_k(x) $$

Let 
$$ H^t_{B}(x,y)= \sum_{k} e^{-\lambda_k t} \varphi_k(x) \varphi_k(y) $$
be the heat kernel for $B$. Then, from the well-known  formula,
$$ (-\Delta)^{-s/2} f(x)= \frac{1}{\Gamma(s/2)} \int_0^\infty t^{s/2-1} \; 
e^{t \Delta} \; f(x) \;  dt $$
we see that $(-\Delta)^{-s/2}$ has an integral representation
$$ (-\Delta)^{-s/2} f(x) = \int_{B} K^s(x,y) \; f(y) \; dy $$
where the kernel $K^s$ is given by
$$ K^s(x,y) \frac{1}{\Gamma(s/2)} \int_0^\infty t^{s/2-1} \; 
H^t_B(x,y) \; f(x) \;  dt $$
and this formula makes sense for $f \in L^p(B)$. Hence, we may define the 
potential spaces for the ball

$$ H^{s,p}_0(B)= \{ u:  u = (-\Delta_B)^{-s/2} f \; \hbox{with} \; f  
\in L^p(B)  \} $$

(We consider the operator $-\Delta_B$ and not $I-\Delta_B$ in the definition of
these spaces since the first eigenvalue $\lambda_1$ of the Laplacian in $B$
is strictly positive). For $p=2$ these spaces are useful in the study of elliptic systems by variational methods (see, e.g.,  \cite{dFF, dFPR}).

The parabolic maximum principle implies that $H^t$ is bounded by the heat kernel of
the whole space $\R^n$:
$$ 0 \leq H^t_{B}(x,y) \leq H^t_{R^n}(x,y) =\frac{1}{(4\pi t)^{n/2}} e^{-|x-y|^2/4t} $$
Hence, we deduce the bound
$$ 0 \leq K^s(x,y) \leq \frac{C(n,s)}{|x-y|^{n-s} } \quad 0 <s < n $$

It follows that we have the pointwise estimate
$$ |(-\Delta_B)^{-s/2} f(x)| \leq C(n,s) \; I^s(|\tilde{f}|)(x) $$
where we denote by $\tilde{f}$ the extension of $f$ by zero outside $B$.
Using Theorem \ref{theorem-DDD2} (as in the proof of theorems
\ref{Ni-potential-spaces} and \ref{embedding-critico}),  we immediately get

\begin{theorem}[Generalization of Ni's inequality for the ball]
Let $1<p<\infty$, $u \in H^{s,p}_{0,rad}(B)$ and $1/p<s<n/p$. Then $u$ is almost everywhere equal to a continuous function in $B-\{0\}$ that satisfies
$$ |u(x)| \leq C |x|^{-\frac{n}{p}+s} \;  \|  u \|_{H^{s,p}_0(B)} $$
\label{Ni-ball}
\end{theorem}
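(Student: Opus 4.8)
The plan is to reduce the estimate on the ball to the whole-space case, so that the argument becomes a verbatim repetition of the proof of Theorem \ref{Ni-potential-spaces}. The device that makes this work is the pointwise kernel bound established just before the statement: given a radial $u \in H^{s,p}_{0,rad}(B)$, I would write $u = (-\Delta_B)^{-s/2} f$ with $f \in L^p(B)$ radial and use
$$ |u(x)| \le C(n,s)\, I^s(|\tilde f|)(x), $$
where $\tilde f$ is the extension of $f$ by zero outside $B$. The crucial observation is that $\tilde f$ is again a radial function on $\R^n$ (because $f$ is radial and $B$ is centered at the origin), with $\|\tilde f\|_{L^p(\R^n)} = \|f\|_{L^p(B)}$; this is precisely what lets me invoke the sharp \emph{radial} weighted fractional integral estimate rather than the general Stein--Weiss Theorem \ref{stein-weiss}.

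The key steps, in order, are as follows. First, multiply the pointwise bound by $|x|^{n/p-s}$ and take the supremum, using $B \subset \R^n$ to pass freely from the ball to the whole space:
$$ \big\||x|^{n/p-s} u\big\|_{L^\infty(B)} \le \big\||x|^{n/p-s} u\big\|_{L^\infty(\R^n)} \le C \big\||x|^{n/p-s} I^s(|\tilde f|)\big\|_{L^\infty(\R^n)}. $$
Second, apply Theorem \ref{theorem-DDD2} in the limiting exponent $q = \infty$ (Remark \ref{remark-infty}) to the radial function $|\tilde f|$, with $\alpha = 0$ and $\beta = s - n/p$, so that $|x|^{-\beta} = |x|^{n/p-s}$. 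The scaling identity $\tfrac1q = \tfrac1p + \tfrac{\alpha+\beta-s}{n}$ holds automatically when $q = \infty$, while the admissibility conditions $\beta < n/q$ and the lower bound on $\alpha+\beta$ reduce precisely to the hypotheses $1/p < s < n/p$. This gives
$$ \big\||x|^{n/p-s} I^s(|\tilde f|)\big\|_{L^\infty(\R^n)} \le C \|\tilde f\|_{L^p(\R^n)} = C \|f\|_{L^p(B)} = C \|u\|_{H^{s,p}_0(B)}. $$
Combining the two displays yields the claimed inequality, and the fact that $u$ agrees a.e.\ with a continuous function on $B - \{0\}$ follows from the density and uniform-convergence remark recorded in the introduction.

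I expect no genuine obstacle here, since the only substantial input — the pointwise domination $|u(x)| \le C\, I^s(|\tilde f|)(x)$ — has already been supplied: the parabolic maximum principle bounds the Dirichlet heat kernel $H^t_B$ by the Gaussian heat kernel of $\R^n$, and integrating the subordination formula in $t$ transfers this to $K^s(x,y) \le C|x-y|^{s-n}$. The two small points I would be careful about are that extending $f$ by zero preserves radial symmetry, so that the refined radial estimate applies, and that it leaves the $L^p$ norm unchanged; both are immediate. Everything else is the same bookkeeping as in the proof of Theorem \ref{Ni-potential-spaces}.
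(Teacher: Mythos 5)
Your proposal is correct and follows essentially the same route as the paper: the pointwise domination $|u(x)|\le C\,I^s(|\tilde f|)(x)$ obtained from the parabolic maximum principle, followed by the weighted radial fractional integral estimate of Theorem \ref{theorem-DDD2} with $q=\infty$ (Remark \ref{remark-infty}), $\alpha=0$ and $\beta=s-n/p$, exactly as in the proof of Theorem \ref{Ni-potential-spaces}. The paper states this reduction in one line; you have merely spelled out the (correct) details, including the observation that the zero extension $\tilde f$ remains radial with unchanged $L^p$ norm.
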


\begin{theorem}
Let $1<p<\infty$, $0<s<n/p$,  $c>-n$ be such that   $ (1-sp)c \le (n-1)ps $, and let $p^*_c=\frac{p(n+c)}{n-sp}$ . Then
$$ \Vert |x|^{c/p^*_c} u\Vert_{L^{p^*_c}(\R^n)} \le C \; \Vert u
\Vert_{H^{s,p}_0(B)}$$
for any radial function $u \in H^{s,p}_{0,rad}(B)$.
\end{theorem}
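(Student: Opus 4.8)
The plan is to mirror the proof of Theorem~\ref{embedding-critico}, which is the analogous critical embedding on all of $\R^n$, replacing the representation $u=G_s*f$ by the integral representation of $(-\Delta_B)^{-s/2}$ and exploiting the pointwise kernel bound obtained in the preamble of this section. First I would write $u=(-\Delta_B)^{-s/2}f$ with a radial $f\in L^p(B)$, so that by definition $\|u\|_{H^{s,p}_0(B)}=\|f\|_{L^p(B)}$. Extending $f$ by zero to a function $\tilde f$ on $\R^n$ keeps it radial and preserves the norm, $\|\tilde f\|_{L^p(\R^n)}=\|f\|_{L^p(B)}$. This is the substitute for the convolution structure used in the whole-space case, and it plays here the role that the pointwise bound \eqref{fractional-integral-bound} played there.

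The second step is to invoke the pointwise estimate
$$ |u(x)|=|(-\Delta_B)^{-s/2}f(x)|\le C(n,s)\,I^s(|\tilde f|)(x), $$
which follows from the comparison $0\le K^s(x,y)\le C|x-y|^{s-n}$ established above via the parabolic maximum principle. Since $u$ vanishes outside $B$, enlarging the domain of integration from $B$ to $\R^n$ only increases the right-hand side, and hence
$$ \||x|^{c/p^*_c}u\|_{L^{p^*_c}(\R^n)}\le C\,\||x|^{c/p^*_c}I^s(|\tilde f|)\|_{L^{p^*_c}(\R^n)}. $$

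Finally, I would apply Theorem~\ref{theorem-DDD2} to the radial function $\tilde f$ with $q=p^*_c$, $\alpha=0$ and $\beta=-c/p^*_c$, exactly as in Theorem~\ref{embedding-critico}. The scaling relation $\tfrac1q=\tfrac1p+\tfrac{\alpha+\beta-s}{n}$ holds by the definition of $p^*_c$, the condition $\alpha<n/p'$ is trivial, and $\beta=-c/p^*_c<n/p^*_c=n/q$ because $c>-n$. The one condition that genuinely uses the hypotheses is $\alpha+\beta\ge(n-1)(\tfrac1q-\tfrac1p)$, which after clearing denominators (using $n-sp>0$) is equivalent to the stated constraint $(1-sp)c\le(n-1)ps$. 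This gives
$$ \||x|^{c/p^*_c}I^s(|\tilde f|)\|_{L^{p^*_c}(\R^n)}\le C\,\|\tilde f\|_{L^p(\R^n)}=C\,\|u\|_{H^{s,p}_0(B)}, $$
completing the argument.

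I expect no serious obstacle: the entire content is the reduction to the whole-space Stein--Weiss estimate for radial functions through the heat-kernel comparison, so the proof is essentially identical to that of Theorem~\ref{embedding-critico} and to the companion Theorem~\ref{Ni-ball}. The only point requiring care is verifying that the admissibility conditions of Theorem~\ref{theorem-DDD2} translate correctly, in particular the equivalence of $\alpha+\beta\ge(n-1)(\tfrac1q-\tfrac1p)$ with the hypothesis on $c$, together with checking $p\le q=p^*_c$; these are purely bookkeeping computations identical to the unweighted ball case.
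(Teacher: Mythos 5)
Your proposal is correct and follows exactly the route the paper takes: the paper obtains this theorem (together with Theorem \ref{Ni-ball}) directly from the heat-kernel comparison $0 \le K^s(x,y) \le C(n,s)|x-y|^{s-n}$, the resulting pointwise bound $|(-\Delta_B)^{-s/2}f(x)| \le C\, I^s(|\tilde f|)(x)$ for the zero-extension $\tilde f$, and an application of Theorem \ref{theorem-DDD2} with the same parameters $q=p^*_c$, $\alpha=0$, $\beta=-c/p^*_c$ used for Theorem \ref{embedding-critico}. The paper itself gives no more detail than ``using Theorem \ref{theorem-DDD2} (as in the proof of Theorems \ref{Ni-potential-spaces} and \ref{embedding-critico}), we immediately get'' the result, so your explicit verification that $\alpha+\beta\ge(n-1)(\tfrac1q-\tfrac1p)$ reduces to $(1-sp)c\le(n-1)ps$ is, if anything, more complete than the source.
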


Related inequalities for Sobolev spaces of integral order in a ball are proved in \cite{DMM}. In particular, the exponent in Theorem \ref{Ni-ball} coincides for $p=2$ and integral $s$ with that of \cite[Theorem 1.1(2)]{DMM}.

\end{document}